\newtheorem{thm}{Theorem}[section]
\newtheorem{cor}[thm]{Corollary}
\newtheorem{prop}[thm]{Proposition}
\newtheorem{lem}[thm]{Lemma}
\theoremstyle{definition}
\numberwithin{equation}{section}
\newcommand{\Irr}{\mathrm{Irr}\,}
\newcommand{\Irra}{\mathrm{Irr}}
\newcommand{\F}{{\mathbb{F}}}
\newcommand{\Z}{{\mathbb{Z}}}
\newcommand{\uG}{{\underline{G}}}
\newcommand{\uX}{{\underline{X}}}
\newcommand{\uY}{{\underline{Y}}}
\newcommand{\cF}{{\mathcal{F}}}
\newcommand{\End}{\mathrm{End}}
\newcommand{\GL}{\mathrm{GL}}
\newcommand{\Grass}{\mathrm{Grass}}
\newcommand{\SL}{\mathrm{SL}}
\newcommand{\PGL}{\mathrm{PGL}}
\newcommand{\Span}{\mathrm{Span}}
\newcommand{\Spec}{\mathrm{Spec}\;}
\newcommand{\im}{\mathrm{im}}
\newcommand\emptypart{\emptyset}
\begin{document}
\title{Flatness of the commutator map over $\SL_n$}

\author{Michael Larsen}
\email{mjlarsen@indiana.edu}
\address{Department of Mathematics\\
    Indiana University \\
    Bloomington, IN 47405\\
    U.S.A.}

\author{Zhipeng Lu}
\email{zhipeng.lu@uni-goettingen.de}
\address{Mathematisches Institut\\
Georg-August-Universität G\"ottingen
Wilhelmsplatz 1
37073 G\"ottingen
Germany}

\begin{abstract}
Let $K$ be any field and $n$ a positive integer.  If we denote by $\xi_{\SL_n}\colon \SL_n\times \SL_n\to \SL_n$
the commutator morphism over $K$, then $\xi_{\SL_n}$ is flat over the complement of the center
of $\SL_n$.
\end{abstract}

\thanks{ML was partially supported by NSF grant DMS-1702152.}

\maketitle

\section{Introduction}

Let $\uG$ denote a semisimple algebraic group over a field $K$, and let $g$ be a positive integer.  We define $\xi_{\uG,g}\colon \uG^{2g}\to \uG$
to be the morphism of varieties defined by the genus $g$ surface word:
$$\xi_{\uG,g}(x_1,y_1,\ldots,x_g,y_g) = [x_1,y_1]\cdots[x_g,y_g],$$
where $[x,y] := xyx^{-1}y^{-1}$.  It is known that for any $\uG$ and any $g\ge 2$, this morphism is flat.
To the best of our knowledge, this observation was first made explicitly in a 2016 paper of Avraham Aizenbud and Nir Avni \cite{AA}, but it may have been known quite a bit earlier.
In \cite{AA}, it is credited to Jun Li \cite{Li}, and it certainly follows easily from estimates of Martin Liebeck and Aner Shalev \cite{LS}.
On the other hand, it is easy to see that for $g=1$, the morphism $\xi_{\uG} := \xi_{\uG,1}$ cannot be flat, since generically $\dim \xi_{\uG}^{-1}(g) = \dim \uG$, while $\dim \xi_{\uG}^{-1}(1) = \dim \uG+\mathrm{rank}\,\uG$.  
One might still hope that $\xi_{\uG}$ is flat away from $\xi_{\uG}^{-1}(1)$.  In this paper, we prove that this is true for $\SL_n$ when $n$ is prime and moreover that for all $n$,
$\xi_{\SL_n}$ is flat away from the inverse image of the center of $\SL_n$.

By a standard argument which can be carried out in the language of model theory or that of algebraic geometry, it suffices to prove the statement when $K$ is a finite field.
Since $\SL_n^2$ and $\SL_n$ are non-singular, it suffices to prove that the dimensions of all fibers over non-central elements are equal.
Using Lang-Weil, we convert this to a counting problem, and we can then use the Frobenius formula to express the commutator fibers in terms of the characters
of $\SL_n(\F_q)$.  It is technically easier to work with the characters of $\GL_n(\F_q)$ (where the character table was first calculated by J.\ A.\ Green \cite{Green}),
so most of this paper is devoted to character estimates on groups of this kind.  A key role is played by an estimate of Roman Bezrukavnikov, Martin Liebeck, Aner Shalev, and Pham Tiep \cite{BLST}.

Let $q$ denote a prime power.  We define $G_n$ to be $\GL_n(\F_q)$, and we denote by $V := V_n$ the vector space $\F_q^n$ of column vectors on which $G_n$ acts.  Throughout this paper, the implicit constants in estimates of the form $O(q^m)$ will always be understood to depend on $n$ but not on $q$ (possible dependence on other parameters will be mentioned explicitly).

Let $\xi_{G_n}\colon G_n\times G_n\to G_n$ denote the commutator map at the level of sets.  A well-known theorem of Frobenius implies that
\begin{equation}
\label{Frob}
|\xi_{G_n}^{-1}(g)| = |G_n|\sum_{\chi\in \Irr G_n} \frac{\chi(g)}{\chi(1)}.
\end{equation}

\section{General Character Estimates for $\GL_n(\F_q)$}

The goal in this section and the next is to prove upper bounds for values of irreducible characters $\chi$ of $G_n$ at non-central elements $g$ sufficient to bound the Frobenius sum
$$\sum_{\chi\in \Irr G_n} \frac{\chi(g)}{\chi(1)}.$$

We begin by recalling the classification of conjugacy classes and irreducible characters
of $G_n$.  Our general reference for this section is \cite{Green}.

Let $\Lambda$ denote the set of all partitions, which we consider as the free multiplicative monoid on generators $1,2,3,\ldots$.  We denote the empty partition $\emptypart$.  For $\lambda\in \Lambda$, we denote by $|\lambda|$ the sum of the parts of $\lambda$.  We denote by $\Lambda_n$ the set of $\lambda\in \Lambda$ with $|\lambda|=n$.

We define a \emph{type} to be a function $\tau\colon \Lambda\setminus\{\emptypart\}\to \Lambda$ which is \emph{finitely supported}, i.e., satisfying 
$$|\Lambda\setminus \tau^{-1}(\emptypart)| < \infty.$$
The \emph{degree} of the type is given by the formula
\begin{equation}
\label{weight}
n = \sum_{\lambda\neq\emptypart} |\lambda||\tau(\lambda)|.
\end{equation}
There is a natural multiplication on the set of types, and degrees add.  Every type has a unique decomposition as a product of primary types.

The set of types of degree $n$ will be denoted $T_n$.  For instance, $T_2$ consists of four types, each supported on a single element of $\Lambda\setminus\{\emptypart\}$: 
\begin{equation}
\label{two}
2\mapsto 1,\ 1\mapsto 1^2,\ 1\mapsto 2,\ 1^2\mapsto 1.
\end{equation}
In general, $T_n$ is finite.
A type $\tau$ is \emph{primary} if its support consists of a single $\lambda$ and $\tau(\lambda)$ is a generator $s$ of $\Lambda$.

We denote by $F$ the set of monic irreducible polynomials in $\F_q[t]$, excluding the polynomial $t$; for $f\in F$, $d(f)$ denotes the degree of $f$.  
For $n\ge 1$, Jordan decomposition gives a natural bijection between finitely supported functions $\varphi\colon F\to \Lambda$ for which 
$$deg\varphi := \sum_{f\in F} d(f) |\varphi(f)| = n$$
and conjugacy classes in $G_n$.  Note that in the notation of \cite{Green}, a single Jordan block of size $k$ corresponds to the partition $1^k$, while $k$ distinct blocks of size $1$ correspond to $k$.

We say \emph{$\varphi$ has type $\tau$} if 
for all $\lambda\neq\emptypart$ we have $\tau(\lambda) = \prod_{f\in \varphi^{-1}(\lambda)} d(f)$.
(Note that this product takes place in $\Lambda$, so that, e.g., $1\cdot 1 = 1^2$.)
For example, a conjugacy class of $G_2$ belongs to one of the four types in $T_2$ listed in (\ref{two})
if, respectively, it is central, split regular semisimple, non-split regular semisimple, or a scalar multiple of the class of transvections.
The conjugacy class associated to $\varphi$ is said to be primary if $c$ is of primary type; equivalently, if the characteristic polynomial is a power of an irreducible polynomial $F$.

The situation for $\Irr G_n$ is dual.  We define an \emph{$s$-simplex} to be an $s$-element subset of $\Z/(q^s-1)\Z$ which forms a single orbit under the action on $\Z/(q^s-1)\Z$
of the group generated by multiplication by $q$.  An $s$-simplex can therefore be represented as 
$$\{k,qk,q^2k,\ldots,q^{s-1}k\}$$ 
where $k\in \Z/(q^s-1)\Z$, and $(q^t-1)k\neq 0$ for every proper divisor $t$ of $s$.
We denote by $\Sigma$ the set of all simplices, i.e., the disjoint union over positive integers $s$ of the set of $s$-simplices.
If $\sigma$ is an $s$-simplex, we call $s$ the \emph{degree} of $\sigma$ and denote it $d(\sigma)$.  The irreducible representations of $G_n$ are in bijective correspondence with functions $\psi\colon \Sigma\to \Lambda$ satisfying
$$\sum_{\sigma\in \Sigma} d(\sigma)|\psi(\sigma)| = n.$$

The character associated to $\psi$ has type $\tau$ if 
for all $\lambda\neq\emptypart$ we have $\tau(\lambda) = \prod_{\sigma\in \psi^{-1}(\lambda)} d(\sigma)$.
A character of $G_2$ belongs to one of the four types in $T_2$ listed in (\ref{two})
if, respectively, it is linear, principal series, discrete series, or (up to twist) Steinberg.
A character is \emph{primary} if it is associated with a single simplex $\sigma$, i.e., if $\psi$ is of primary type.
Following \cite{Green}, we denote by $(-1)^{n-v}I_s^k[\lambda]$ the primary character associated to the $s$-simplex $\sigma = \{k,qk,\ldots,q^{s-1}k\}$ and the partition $\lambda$ of $v := n/s$.  
In \S3, we estimate the values of characters of this form.


For $\tau\in T_n$, we denote by $\Irra_\tau G_n$ the set of characters of type $\tau$.
There may or may not exist $\tau\in T_n$ such that $\Irra_\tau G_n$ is  empty, but if $q$ is sufficiently large in terms of $n$, there exist characters of all types.
We have
$$\sum_{\chi\in \Irr G_n} \frac{\chi(g)}{\chi(1)} = \sum_{\tau\in T_n} \sum_{\chi\in \Irra_\tau G_n}\frac{\chi(g)}{\chi(1)}.$$
The degree of a character depends only on its type $\tau$ \cite[Theorem 14]{Green}; we denote this common degree $\deg \tau$.

If $n_1+\cdots+n_m = n$ and $\alpha_i\in \Irr G_{n_i}$, we denote by $\alpha_1\circ \cdots\circ \alpha_m$ the character of $G_n$ obtained by parabolic induction of $\alpha_1\boxtimes\cdots\boxtimes \alpha_m$.
By \cite[Theorem 2]{Green},
\begin{equation}
\label{Hall}
\alpha_1\circ\cdots\circ\alpha_m(g) = \sum_{c_1,\ldots,c_m} H_{c_1,\ldots,c_m}^g\alpha_1(c_1)\cdots\alpha_m(c_m),
\end{equation}
where $c_i$ denotes a conjugacy class of $G_{n_i}$, and $H_{c_1,\ldots,c_m}^g$ is the number of flags $V^\bullet =  (V^1\supset V^2\supset \cdots\supset V^m = \{0\})$ fixed by $g$ such that $\dim V^{i-1}/V^i = n_i$
and the conjugacy class of $g_i$ in $\GL(V^{i-1}/V^i)$ is $c_i$.

If $\sigma_1,\ldots,\sigma_k$ are pairwise distinct simplices and $\chi_1,\ldots,\chi_m$ are primary characters associated to simplices $\sigma_1,\ldots,\sigma_m$ and partitions $\lambda_1,\ldots,\lambda_m$ respectively, then
$\chi_1\circ\cdots\circ\chi_m$
is the irreducible character of $G_n$ associated with the function $\psi\colon \Sigma\to \Lambda$ such that $\psi(\sigma_i) = \lambda_i$ and $\psi(\sigma) = \emptypart$ if $\sigma\not\in \{\sigma_1,\ldots,\sigma_m\}$ \cite[Theorem 13]{Green}.
In particular, the type of $\chi_1\circ\cdots\circ\chi_m$ is the product of the types of the $\chi_i$. 

In order to use equation (\ref{Hall}), we will need to estimate the number of flags of $V$ respecting the action of $g$.
If $V^\bullet$ denotes a flag in $V$, $g$ is any element of $\End_{\F_q}(V) = M_n(\F_q)$, and $S$ is a subset of the index set $\{1,2,\ldots,m\}$, we say that $V^\bullet$ is \emph{$g$-stable with respect to $S$} if $g$ stabilizes each $V^i$ and $g$ acts as a scalar on $V^{i-1}/V^i$ for all $i\not\in S$.  It is clear that if $V^\bullet$ is $g$-stable then it is $P(g)$-stable for every polynomial $P(t)\in \F_q[t]$; in particular, it is both  $g_s$-stable and $g_n$-stable, where
$g = g_s + g_n$ is the additive Jordan decomposition.
We say $V^\bullet$ is \emph{strictly $g$-stable with respect to $S$} if it is $g$-stable and additionally, $g$ does not act as a scalar on $V^{i-1}/V^i$ for all $i\in S$.

\begin{prop}
\label{flag-prob}
Let  $m\ge 2$, and let $n>a_1 > a_2 > \cdots > a_m = 0$ be a fixed strictly decreasing sequence of positive integers, and $S$ a subset of the index set $\{1,2,\ldots,m\}$.

Let $g$ be any non-central element of $M_n(\F_q)$.  The probability that a uniformly randomly chosen flag 
$$V^1\supset V^2\supset\cdots \supset V^m = \{0\}$$ 
of subspaces of $V^0 := V$ of dimensions $a_1,\ldots,a_m$ is $g$-stable with respect to $S$ is
\begin{equation}
\label{flag-est}
O\bigl(q^{1-n+\sum_{i\in S} (\dim V^{i-1}/V^i-1)}\bigr).
\end{equation}
\end{prop}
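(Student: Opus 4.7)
The plan is to partition the valid flags according to the smallest $k\in\{1,\ldots,m\}$ for which $g|_{V^k}$ is a scalar multiple of the identity; such a $k$ exists because $V^m=\{0\}$, and satisfies $k\ge 1$ because $g$ itself is non-central. I will bound the number $N_k$ of valid flags with this value of $k$ separately for each $k$, then sum over the finitely many possibilities.

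For each $i<k$ the restriction $g|_{V^{i-1}}$ is non-scalar, and I plan to count the admissible $V^i\subset V^{i-1}$ step by step. If $i\in S$, the $g|_{V^{i-1}}$-stable subspaces of dimension $a_i$ form a proper closed subvariety of $\Grass(a_i,V^{i-1})$, so there are $O(q^{a_ib_i-1})$ of them. If $i\notin S$, the scalar-on-quotient condition forces $V^i\supset(g-\lambda I)V^{i-1}$ for some $\lambda$; since $g|_{V^{i-1}}$ is non-scalar, $\rank(g-\lambda I)\ge 1$ for every such $\lambda$, and summing over the bounded number of possible eigenvalues gives at most $O(q^{(a_i-1)b_i})$ choices for $V^i$. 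At step $i=k$ I additionally impose that $g|_{V^k}$ be scalar: when $k\in S$ this confines $V^k$ to a single eigenspace of $g|_{V^{k-1}}$, of dimension at most $a_{k-1}-1$, producing $O(q^{a_k(b_k-1)})$ choices; when $k\notin S$ the two constraints together force the minimal polynomial of $g|_{V^{k-1}}$ to divide some $(t-\lambda)(t-\mu)$, and an explicit Gaussian-binomial count then yields at most $O(q^{(a_k-1)(b_k-1)})$ choices. For $i>k$ every subspace of $V^{i-1}$ is automatically admissible, contributing the full $\binom{a_{i-1}}{a_i}_q\sim q^{a_ib_i}$ possibilities.

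Multiplying these per-step bounds, dividing by the total number of flags $\sim q^{\sum_i a_ib_i}$, and using $\sum_i b_i=n$ together with $a_k=\sum_{i>k}b_i$, the inequality (\ref{flag-est}) reduces in both cases $k\in S$ and $k\notin S$ to the trivial estimate $\sum_{i\in S\cap[k+1,m]} b_i\ge |S\cap[k+1,m]|$, valid because every $b_i\ge 1$. The main obstacle is precisely the case $k<m$: here a naive product of per-step probabilities using only the Grassmannian bound at step $k$ falls short of (\ref{flag-est}) by a factor of $q^{a_k-1}$, and this gap is made up exactly by the sharper estimate at step $k$ coming from the sudden scalarization of $g$. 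Once this rigidity is exploited, the bookkeeping is uniform across $k$, $S$, and the dimension sequence $(a_1,\ldots,a_m)$, and the proposition follows by summing the $O(1)$ contributions.
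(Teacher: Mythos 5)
Your argument is correct, and it takes a genuinely different route from the paper's. The paper first reduces to strictly stable flags and, via Jordan decomposition, to $g$ semisimple or nilpotent; it then inducts on the length $m$ to reduce everything to the two-step case $m=2$, which is handled by a further induction on $n$ (isotypic decomposition in the semisimple case, a direct kernel/image analysis in the nilpotent case). You instead stratify the $g$-stable flags by the first index $k$ at which $g|_{V^k}$ becomes scalar and bound each stratum in a single pass down the flag, using three explicit counts: the invariant-subspace locus having codimension at least one in $\Grass(a_i,V^{i-1})$ when $i\in S$, the containment $V^i\supseteq(g-\lambda)V^{i-1}$ with $\rank (g-\lambda)|_{V^{i-1}}\ge 1$ when $i\notin S$, and the eigenspace/minimal-polynomial rigidity at the transition step $i=k$. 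I checked the exponent bookkeeping: with $b_i=\dim V^{i-1}/V^i$ it does reduce, in both cases $k\in S$ and $k\notin S$, to $\sum_{i\in S\cap\{k+1,\ldots,m\}}b_i\ge |S\cap\{k+1,\ldots,m\}|$, so the estimate closes. Your approach buys directness (no reduction to strict stability, no Jordan decomposition, no double induction), while the paper's buys the fact that every step is elementary counting with manifestly uniform constants. The one point you should spell out is uniformity in your $i\in S$ step: ``proper closed subvariety, hence $O(q^{a_ib_i-1})$ points'' needs the implied constant to be independent of $g$ and $q$; this holds because the locus of $g$-invariant $a_i$-planes is cut out in the Pl\"ucker embedding by forms of degree bounded in terms of $n$ only, so the number and degrees of its components are bounded in terms of $n$ and the standard point-count bound applies (or one can substitute an elementary count as in the paper's $m=2$ case). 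Finally, your step-$k$ bound for $k\notin S$ tacitly assumes $V^k\neq 0$; this is harmless because the stratum $k=m\notin S$ is empty (scalar action on $V^{m-1}/V^m=V^{m-1}$ would contradict the minimality of $k$), but it deserves a sentence.
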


\begin{proof}
As $g$-stable flags are both $g_s$-stable and $g_n$-stable, we may replace $g$ with either $g_s$ or $g_n$ (which cannot both be central since $g$ is not)
and assume without loss of generality that $g$ is either semisimple or nilpotent.

Since the set of $g$-stable flags with respect to $S$ is the union, as $T$ ranges over subsets of $S$, of the set of strictly $T$-stable flags,
it suffices to prove the bound (\ref{flag-est}) for the probability that a random flag is strictly $g$-stable.

We use induction on $m$.  Assume $m\ge 3$, and assume the proposition is known for flags of length $<m$, in particular flags of length $2$.  Thus, the probability
that a random $a$-dimensional subspace $W$ is strictly $g$-stable with respect to $S$ is $O(q^{1-n})$, $O(q^{-a})$, $O(q^{a-n})$, or $O(q^{-1})$ respectively if
$S$ is $\emptyset$, $\{1\}$, $\{2\}$, or $\{1,2\}$.  Applying this to $W := V^1$, the condition that $V^{\bullet}$ is strictly $g$-stable with respect to $S$ means that 
$g$ restricts to a (possibly central) endomorphism $g_1$ of $V^1$, and both of the following are true
\begin{enumerate}
\item The flag $V^2\supset \cdots\supset V^m = \{0\}$ in $V^1$ is strictly $g_1$-stable with respect to $S_1 := \{i\in \Z^{>0}\mid i+1\in S\}$.
\item The action of $g$ on $V/V^1$ is scalar if and only if $1\not\in S$.
\end{enumerate}
There are two cases to consider, according to whether $g_1$ is central or not.
In the central case,  $S_1$ must be empty.  This means that the flag $V^1\supset \{0\}$ in $V$ is strictly
$g$-stable with respect to $S$ (which is a subset of $\{1\}$).  By the induction hypothesis,
the probability of this occurring is $O(q^{1-n})$ or $O(q^{-a_1})$ as $S$ is empty or equal to $\{1\}$.
This is exactly the bound (\ref{flag-est}) in this case.

The probability that $g$ stabilizes $V^1$ but does not act on it as a scalar is $O(q^{-1})$ or $O(q^{a_1-n})$, depending on whether $1\in S$ or not.
The probability that (1) holds conditional on $g$ stabilizing $V^1$ and acting as the non-scalar endomorphism $g_1$
is 
$$O\bigl(q^{1-a_1+\sum_{i\in S_1}(\dim V^i/V^{i+1}-1)}\bigr) = O\bigl(q^{1-a_1+\sum_{i\in S\setminus\{1\}}(\dim V^{i-1}/V^i-1)}\bigr),$$
so multiplying by $O(q^{-1})$ or $O(q^{a_1-n})$, we obtain the bound (\ref{flag-est}).

It remains to prove the base case, $m=2$.  Assume first that $g$ is semisimple.  We proceed by induction on $n$.
If $V$ admits  a decomposition $V'\oplus V''$ into $g$-stable subspaces with no irreducible factor in common,
then every $g$-stable subspace $W\subset V$ is of the form $W'\oplus W''$, where $W'\subset V'$ and $W''\subset V''$.
Moreover, if the flag $W\supset \{0\}$ of $V$ is $g$-stable for $S$, the same is true of the flags $W'\supset \{0\}$ and $W''\supset \{0\}$ of $V'$ and $V''$ respectively.

We denote by $n'$ and $n''$ the dimensions of $W'$ and $W''$ and likewise by $a'$ and $a''$ the dimensions of $W'$ and $W''$.
Now,
$$\dim \Grass(n,a) - \dim \Grass(n',a')\times \Grass(n'',a'') = a'(n''-a'') + a''(n'-a') > 0$$
since $a,n',n'',n-a>0$ implies that each of the following conditions is impossible:
\begin{align*}
a'=a''&=0\\ a'=n'-a'&=0\\ n''-a''=a''&=0\\ n''-a''=n'-a'&=0.
\end{align*}
Thus, the probability that a random subspace $W$ decomposes as $W'\oplus W''$ is $O(q^{-1})$, and fixing $a',a'',n',n''$
and using induction, we conclude that the probability that $W\supset \{0\}$ is $g$-stable for $S$ is
\begin{align*}
O(q^{-1} q^{1-n'+\sum_{i\in S} (\dim {V'}^{i-1}/{V'}^i-1)} &q^{1-n''+\sum_{i\in S}(\dim  {V''}^{i-1}/{V''}^i-1)}) \\
&\qquad= O(q^{1-n+ \sum_{i\in S} (\dim V^{i-1}/V^i-1)}).
\end{align*}

We may therefore assume the action of $g$ on $V$ is isotypic, associated to a polynomial $f\in F$.  Since $g$ is not scalar, $d(f)\ge 2$.  The action of $g$ endows $V$ with the structure of $\F_{q^{d(f)}}$-vector space of dimensions $n/d(f)$.
For $W$ to be $g$-stable means just that it is an $\F_{q^{d(f)}}$-subspace.  The probability of this is $O\bigl(q^{(1/d(f)-1)a(n-a)}\bigr)$.
For $W\supset \{0\}$, regarded as a flag of $\F_q$-subspaces of $V$, to be $g$-stable with respect to $S$ implies that regarded as a flag of $\F_{q^{d(f)}}$-subspaces of $V$
it is also $g$-stable with respect to $S$.  By the induction hypothesis, the probability of this is
\begin{multline*}
O\Bigl(q^{(1/d(f)-1)a(n-a)}q^{d(f)\bigl(1-n/d(f)+\sum_{i\in S} (\dim_{\F_{q^{d(f)}}} V^{i-1}/V^i-1)\bigr)}\Bigr) \\
=O\bigl(q^{(1/d(f)-1)a(n-a)}q^{d(f)-1}q^{1-n+\sum_{i\in S} (\dim V^{i-1}/V^i-d(f))}\bigr).
\end{multline*}
Now $d(f)$ divides $a$ as well as $n$, so $2\le d(f) \le n/2$.  Thus,
$$d(f)-1 < \frac{n-1}2 \le \frac{a(n-a)}2 \le (1-1/d(f)) a(n-a),$$
so this implies (\ref{flag-est}).

Finally, we consider the case that $g$ is nilpotent.  If $S=\{1,2\}$, there is nothing to prove.  If $W\supset \{0\}$ is $g$-stable with respect to $\{1\}$, then $W$ is a subspace of $V$ on which $g$ acts as a scalar,
which must be $0$.  Thus, $W\subset \ker g$.  The probability that a random $a$-dimensional subspace of $V$ lies in any given space of dimension $\le n-1$ is $O(q^{-a})$, which gives (\ref{flag-est}).  If $W\supset \{0\}$ is $g$-stable with respect to $\{2\}$, then $g$ annihilates $V/W$, which means that $\im g\subset W$.  The probability that a random $a$-dimensional subspace of $V$ contains any specified non-zero vector is 
$O(q^{a-n})$, which again gives (\ref{flag-est}).  Finally, the probability that $\im g\subset W\subset \ker g$ is zero unless $g^2=0$ and $\dim \im g\le a\le \dim \ker g$, in which case it is
\begin{align*}
&\frac{|\Grass(\dim \ker g-\dim \im g,a - \dim \im g)|}{|\Grass(n,a)|}  \\ & \qquad\qquad\qquad= O(q^{a(n-\dim \ker g - \dim \im g) - \dim \im g\dim \ker g}).
\end{align*}
As $g^2 = 0$, $\dim \ker g+\dim \im g = n$, so we get a bound $O(q^{-\dim\ker g(n-\dim \ker g)})$ which in turn is bounded above by $O(q^{1-n})$, giving (\ref{flag-est}).

\end{proof}

We define the \emph{dimension} $\dim \tau$ of a type $\tau\in T_n$ to be
$$\dim \tau = \sum_{\lambda\in \Lambda} |\tau(\lambda)|.$$
Our definition is motivated by the following lemma:

\begin{lem}
\label{char-count}
For all $\tau\in T_n$,
$$|\Irra_\tau G_n| = O(q^{\dim\tau}).$$
\end{lem}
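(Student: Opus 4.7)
The plan is to unwind the parameterization of $\Irr G_n$ by functions $\psi\colon\Sigma\to\Lambda$ recalled earlier in \S2 and count directly the number of such $\psi$ whose associated character has type $\tau$. Fixing $\tau$, the condition that $\psi$ gives a character of type $\tau$ decouples completely across the set of partitions $\lambda\in \Lambda\setminus\{\emptypart\}$: the preimage $\psi^{-1}(\lambda)$ must be a finite set of simplices whose multiset of degrees equals the partition $\tau(\lambda)$. Since the support of $\tau$ is finite and the preimages $\psi^{-1}(\lambda)$ for different $\lambda$ are disjoint, bounding $|\Irra_\tau G_n|$ from above reduces to multiplying, over all $\lambda$ in the support of $\tau$, upper bounds for the number of choices of $\psi^{-1}(\lambda)$.

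Given $\lambda$, write $\tau(\lambda)$ as a partition whose part $s$ occurs with multiplicity $e_s = e_s(\lambda)$, so that $|\tau(\lambda)| = \sum_s s\, e_s$. Choosing $\psi^{-1}(\lambda)$ then amounts to choosing, independently for each $s\ge 1$ with $e_s>0$, an $e_s$-element subset of the set of $s$-simplices. The total number of $s$-simplices is at most $(q^s-1)/s$, so the number of $e_s$-subsets is $O(q^{s e_s})$, with an implicit constant depending only on $e_s\le n$. Multiplying over $s$ gives $O(q^{|\tau(\lambda)|})$ choices for $\psi^{-1}(\lambda)$.

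Multiplying over the finitely many $\lambda$ in the support of $\tau$ produces an upper bound of
\[
\prod_{\lambda} O\bigl(q^{|\tau(\lambda)|}\bigr) = O\bigl(q^{\sum_\lambda |\tau(\lambda)|}\bigr) = O(q^{\dim\tau}),
\]
as desired. Here we are overcounting, because a priori the same simplex might be assigned to distinct partitions $\lambda$, but overcounting only loosens the upper bound, and we may check that the degree constraint $\sum_\sigma d(\sigma)|\psi(\sigma)|=n$ is automatic from $n=\sum_\lambda |\lambda||\tau(\lambda)|$ (see (\ref{weight})), so no extra compatibility has to be imposed.

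There is no substantial obstacle; this is really a bookkeeping estimate in which the definition $\dim\tau=\sum_\lambda |\tau(\lambda)|$ is precisely engineered to match the count of simplices of each degree contributing to $\psi^{-1}(\lambda)$. The only care needed is to keep the implicit constants uniform in $q$ (they depend on $n$ through the bounded quantities $e_s$ and $|\supp\tau|$), which is the convention fixed in the introduction.
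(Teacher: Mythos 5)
Your proof is correct and follows essentially the same route as the paper: identify a character of type $\tau$ with its function $\psi\colon\Sigma\to\Lambda$, note that for each $\lambda$ in the support the fiber $\psi^{-1}(\lambda)$ is determined by choosing, for each $s$, a bounded number of $s$-simplices from a set of size $O(q^s)$, and multiply the resulting bounds $O(q^{|\tau(\lambda)|})$ over $\lambda$ to get $O(q^{\dim\tau})$. Your added remarks on overcounting and the automatic degree constraint are fine but not needed; nothing further is required.
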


\begin{proof}
Every $\chi\in \Irra_\tau G_n$ determines a function $c\colon \Sigma\to \Lambda$ with the property that for $\lambda \neq \emptypart$, if $\tau(\lambda) =  1^{a_1}2^{a_2}\cdots$, then for all positive integers $s$,
there are exactly $a_i$ degree-$s$ simplices $\sigma\in c^{-1}(\lambda)$.  As $c$ is determined by this collection of data, it suffices to prove that the number of ways to choose $a_1$ $1$-simplices, $a_2$ $2$-simplices, and so on, is 
$$O(q^{a_1 + 2a_2 + \cdots}) = O(q^{|\tau(\lambda)|}).$$
Since every $s$-simplex is determined by a (generally non-unique) element $k\in \Z/p^s\Z$, it follows that there are  $O(q^{s a_s})$ ways in which to choose $a_s$ $s$-simplices.
\end{proof}

\begin{prop}
\label{parabolic}
If for every positive integer $n$, every primary type $\tau\in T_n$, every character $\chi$ of $G_n$ of type $\tau$, and every non-central element $g\in G_n$, we have
\begin{equation}
\label{key-bound}
\frac{\chi(g)}{\chi(1)} = O(q^{1-\dim\tau}),
\end{equation}
where the implicit constant does not depend on $\tau$, $\chi$, or $g$,
then for all $\tau\in T_n$ and every non-central element $g\in G_n$, we have
$$\sum_{\chi\in \Irra_\tau G_n} \frac{\chi(g)}{\chi(1)} = O(q).$$
\end{prop}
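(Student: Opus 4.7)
The plan is to reduce to the primary case via parabolic induction. Let $\tau = \tau_1 \cdots \tau_m$ be the unique decomposition of $\tau$ into primary types, and set $n_i := \deg \tau_i$ and $d_i := \dim \tau_i$. Every $\chi \in \Irra_\tau G_n$ has the form $\chi = \chi_1 \circ \cdots \circ \chi_m$ with $\chi_i$ primary of type $\tau_i$, so Green's formula (\ref{Hall}) gives
$$\chi(g) = \sum_{c_1, \ldots, c_m} H^g_{c_1, \ldots, c_m} \prod_{i=1}^m \chi_i(c_i).$$
Specializing at $g = 1$ (where only the tuple of identity classes contributes) yields $\chi(1) = N \prod_i \chi_i(1)$, with $N$ the number of flags in $V$ of shape $(n_1, \ldots, n_m)$. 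Dividing through gives
$$\frac{\chi(g)}{\chi(1)} = \sum_{c_1, \ldots, c_m} \frac{H^g_{c_1, \ldots, c_m}}{N} \prod_{i=1}^m \frac{\chi_i(c_i)}{\chi_i(1)}.$$

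I would then stratify the right-hand sum by the subset $S \subseteq \{1, \ldots, m\}$ of indices at which $c_i$ is non-central in $G_{n_i}$. For $i \notin S$, irreducibility gives $|\chi_i(c_i)/\chi_i(1)| = 1$; for $i \in S$, the hypothesis (\ref{key-bound}) yields $|\chi_i(c_i)/\chi_i(1)| = O(q^{1 - d_i})$. The terms contributing to the $S$-stratum are supported exactly on those flags $V^\bullet$ which are strictly $g$-stable with respect to $S$ in the sense of Proposition \ref{flag-prob}, and that proposition bounds their number by $N \cdot O(q^{1 - n + \sum_{i \in S}(n_i - 1)})$. Multiplying by the character bounds, the $S$-stratum contributes
$$O\bigl(q^{1 - n + \sum_{i \in S}(n_i - 1) + \sum_{i \in S}(1 - d_i)}\bigr) = O\bigl(q^{1 - n + \sum_{i \in S}(n_i - d_i)}\bigr).$$

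Because $\tau_i$ is primary, supported on a single $\lambda_i$ with $\tau_i(\lambda_i)$ a single-part partition, we have $d_i = |\tau_i(\lambda_i)| \le |\lambda_i|\,|\tau_i(\lambda_i)| = n_i$. Hence the nonnegative quantities $n_i - d_i$ for $i \notin S$ can be added in, giving
$$1 - n + \sum_{i \in S}(n_i - d_i) \le 1 - n + \sum_{i=1}^m (n_i - d_i) = 1 - \dim \tau$$
uniformly in $S$. Summing over the $2^m = O(1)$ subsets $S$ yields $|\chi(g)/\chi(1)| = O(q^{1 - \dim \tau})$ with an implicit constant independent of $\chi$ and of $g$, and Lemma \ref{char-count} gives $|\Irra_\tau G_n| = O(q^{\dim \tau})$, whence $\sum_{\chi} \chi(g)/\chi(1) = O(q)$. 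The main technical point is the correct matching between Hall's conjugacy class data $c_i$ and the scalar/non-scalar dichotomy on the flag quotients used in Proposition \ref{flag-prob}; once that identification is in place, the primary inequality $d_i \le n_i$ collapses all $S$-strata into the single bound $O(q^{1-\dim\tau})$.
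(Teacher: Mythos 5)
Your proof is correct and follows essentially the same route as the paper: decompose $\chi$ into a parabolic product of primary characters, expand via Green's formula (\ref{Hall}), stratify by the set $S$ of quotients on which $g$ acts non-scalar, and combine Proposition~\ref{flag-prob}, the hypothesis (\ref{key-bound}), and Lemma~\ref{char-count}. The only difference is cosmetic: you first establish the uniform bound $\chi(g)/\chi(1)=O(q^{1-\dim\tau})$ for each $\chi$ of type $\tau$ and then multiply by the character count, whereas the paper carries out the same exponent bookkeeping while summing over tuples of primary characters.
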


\begin{proof}
If $\tau$ decomposes as a product $\tau_1\cdots\tau_m$ of primary types of degrees $n_1,\ldots,n_m$, then
$$\sum_{\chi\in \Irra_\tau G_n} \frac{\chi(g)}{\chi(1)} = \frac 1{\deg \tau}\sum_{\chi_1\in \Irra_{\tau_1}G_{n_1}} \cdots\sum_{\chi_m\in \Irra_{\tau_m}G_{n_m}}   \chi_1\circ\cdots\circ\chi_m(g).$$
By (\ref{Hall}),
\begin{equation}
\begin{split}
\label{flag-sum}
\frac{\chi_1\circ\cdots\circ\chi_m(g)}{\deg\tau}  &= \frac{\chi_1\circ\cdots\circ\chi_m(g)}{|\cF_{n_1,\ldots,n_m}|\deg\tau_1\cdots\deg\tau_m} \\
             &= \frac{\sum_{(c_1,\ldots,c_m)}H_{c_1,\ldots,c_m}^g\frac{\chi_1(c_1)}{\chi_1(1)}\cdots\frac{\chi_m(c_m)}{\chi_m(1)}}{|\cF_{n_1,\ldots,n_m}|},
\end{split}
\end{equation}
where $\cF_{n_1,\ldots,n_m}$ denotes the set of flags $V^\bullet$ in $V$ with $\dim V^{i-1}/V^i = n_i$.
For $S\subset \{1,2,\ldots,m\}$, let $X_{m,S}$ denote the set of $m$-tuples $(c_1,\ldots,c_m)$ of conjugacy classes in $G_n$ such that $c_i$ is central if and only if $i\not\in S$.
Let $\cF_{n_1,\ldots,n_m}(g,S)$ denote the set of $V^\bullet$ which are strictly $g$-stable with respect to $S$.  Then
\begin{equation}
\label{S-decomp}
\begin{split}
\biggm|\sum_{(c_1,\ldots,c_m)}H_{c_1,\ldots,c_m}^g&\frac{\chi_1(c_1)}{\chi_1(1)}\cdots\frac{\chi_m(c_m)}{\chi_m(1)}\biggm| \\
&\le\sum_S  |\cF_{n_1,\ldots,n_m}(g,S)|
\max_{(c_1,\ldots,c_m)\in X_{m,S}}\biggm| \frac{\chi_1(c_1)}{\chi_1(1)}\cdots\frac{\chi_m(c_m)}{\chi_m(1)} \biggm|.
\end{split}
\end{equation}

By hypothesis, for $(c_1,\ldots,c_m)\in X_{m,S}$,
$$\frac{\chi_1(c_1)}{\chi_1(1)}\cdots\frac{\chi_m(c_m)}{\chi_m(1)} = O(\prod_{i\in S}q^{1-\dim\tau_i}).$$
By Proposition~\ref{flag-prob},
$$\frac{|\cF_{n_1,\ldots,n_m}(g,S)|}{|\cF_{n_1,\ldots,n_m}|} = O(q^{1-n+\sum_{i\in S}(n_i-1)}).$$
Combining this with (\ref{flag-sum}) and (\ref{S-decomp}), we obtain

\begin{equation}
\label{one-char}
\biggm| \frac{\chi_1\circ\cdots\circ\chi_m(g)}{\deg\tau} \biggm| = \sum_S   O\bigl(q^{\sum_{i\in S}(1-\dim \tau_i)+1-n+\sum_{i\in S} (n_i-1)}\bigr).
\end{equation}
By Lemma~\ref{char-count},
$$\Bigm|\prod_{i=1}^m \Irra_{\tau_i} G_{n_i}\Bigm| = O(q^{\sum_{i=1}^m \dim\tau_i}),$$
so as $\sum_{i\in S} n_i\le n$,
$$\sum_{\chi\in\Irra_\tau G_n}\frac{\chi(g)}{\chi(1)} = O(q).$$

\end{proof}

\section{Green polynomials}
In this section, we assemble some basic notation and facts related to Green polynomials.

If $\lambda$ is a partition of $m$, we denote by $\lambda'$ the conjugate partition and by $n_\lambda$ the sum of $x(x-1)/2$ as $x$ ranges over the parts of $\lambda'$.

\begin{lem}
\label{Q}
If $|\lambda|=m$, then
$$n_\lambda =\begin{cases}
m(m-1)/2&\text{if $\lambda = 1^m$},\\
(m-1)(m-2)/2 & \text{if $\lambda = 1^{m-2}2$}, 
\end{cases}$$
Otherwise,
$$n_\lambda \le (m-2)(m-3)/2+1.$$
\end{lem}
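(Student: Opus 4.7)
My plan is to work directly with the parts of the conjugate partition: write $\lambda' = (\mu_1 \ge \mu_2 \ge \cdots \ge \mu_k \ge 1)$ with $\sum_i \mu_i = m$, and use the identity
$$ n_\lambda = \sum_{i=1}^k \binom{\mu_i}{2} = \tfrac{1}{2}\Bigl(\sum_i \mu_i^2 - m\Bigr). $$
The two exact formulas are immediate from the extreme values of $\mu_1$: the case $\mu_1 = m$ forces $\lambda' = (m)$, i.e.\ $\lambda = 1^m$, and the case $\mu_1 = m-1$ forces $\mu_2 = 1$ and $k = 2$, i.e.\ $\lambda' = (m-1,1)$ and $\lambda = 1^{m-2}2$. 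In every remaining case $\mu_1 \le m-2$, and the bound to be proved is equivalent to $\sum_i \mu_i^2 \le (m-2)^2 + 4$.

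I would split on the number of parts $k$. For $k = 2$, we have $\lambda' = (\mu_1, m - \mu_1)$ with $m/2 \le \mu_1 \le m - 2$ (forcing $m \ge 4$). The function $\mu_1^2 + (m-\mu_1)^2$ is convex in $\mu_1$, so it is monotonically increasing on $[m/2, m-2]$ and attains its maximum $(m-2)^2 + 4$ at $\mu_1 = m - 2$. The extremal partition here is $\lambda' = (m-2, 2)$, i.e.\ $\lambda = 1^{m-4}2^2$; this is precisely where the $+1$ in the stated upper bound comes from.

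For $k \ge 3$, I would use the elementary inequality $\mu_i(\mu_i - 1) \le (m-2)(\mu_i - 1)$, valid because $1 \le \mu_i \le m-2$. Summing over $i$ gives $\sum_i \mu_i^2 - m \le (m-2)(m - k) \le (m-2)(m-3)$, which translates to $n_\lambda \le (m-2)(m-3)/2$, strictly below the claimed bound.

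I do not expect a serious obstacle: the content is just recognising that, subject to $\mu_1 \le m-2$, the unique maximiser of $\sum_i \mu_i^2$ is $\lambda' = (m-2, 2)$, and that any partition with three or more parts is strictly beaten by the simple linear majorisation above.
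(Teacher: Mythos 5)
Your proof is correct and follows essentially the same route as the paper's: both pass to the conjugate partition, use the identity $n_\lambda = \tfrac12\bigl(\sum_i (\lambda'_i)^2 - m\bigr)$, and bound the sum of squares by $(m-2)^2+4$ via convexity once $\lambda'_1 \le m-2$. The only difference is organizational — the paper lumps all parts after the first into a single block (treating $\lambda'_1=1$ separately), whereas you split on the number of parts — and both handle the extremal case $\lambda' = (m-2,2)$ identically.
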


\begin{proof}
If $\lambda'_1\ge \ldots\ge \lambda'_r$ denote the sizes of the parts of $\lambda'$, then
$$n_\lambda = \sum_i \Bigl(\frac {(\lambda'_i)^2}2 - \frac {\lambda'_i}2\Bigr) = -\frac m2 + \frac{\sum (\lambda'_i)^2}2.$$
If $\lambda' = m$ or $\lambda' = 1(m-1)$, we obtain the asserted value.  If $\lambda'_1=1$, then $n_\lambda = 0$.
Otherwise, 
$$\sum_i (\lambda'_i)^2 \le (\lambda'_1)^2+(m-\lambda'_1)^2 \le (m-2)^2+4,$$
so we get the asserted bound.
\end{proof}

In \cite{Green}, Green defined for each pair $(\rho,\lambda)$ of partitions with $|\rho|=|\lambda|$ a polynomial $Q_\rho^\lambda(t)\in\Z[t]$.
Green's polynomials satisfy
$$\dim Q_\rho^\lambda(t)\le n_\lambda.$$
For $\lambda=1^m$, they are given explicitly as follows:
$$Q_{1^{r_1}2^{r_2}\cdots}^{1^m}(t) = \frac{\phi_m(t)}{\prod_i (1-t^i)^{r_i}},$$
where $\phi_m(t) = (1-t)(1-t^2)\cdots(1-t^m)$.  A.\ O.\ Morris \cite{Morris} proved:
$$Q_{1^{r_1}2^{r_2}\cdots}^{1^{m-2}2}(t) = \frac{\phi_{m-2}(t)((r_1-1)t^m-r_1t^{m-1}+1)}{\prod_i (1-t^i)^{r_i}}.$$

Every partition $\rho\in \Lambda_n$ determines up to conjugation an element $a_\rho$ in the symmetric group $S_n$, and we denote
by $z_\rho$ the order of the centralizer of $a_\rho$.  For each positive integer $s$, there is a unique endomorphism 
$p_s\colon \Lambda\to \Lambda$ defined on the generators of $\Lambda$ by $p_s(r) = rs$ for all $r\in \Z^{>0}$.  In particular, $p_s(\Lambda_v)\subset \Lambda_{sv}$.

\begin{prop}
\label{cancel}
For all positive integers $s$ and $v$ with $n=sv$,
\begin{equation}
\label{identity}
\sum_{\rho\in p_s(\Lambda_v)} \frac{Q_{\rho}^{1^n}(t)}{z_\rho} = \frac{\phi_n(t)}{s^v\phi_v(t^s)},
\end{equation}
and if $s>1$, then
\begin{equation}
\label{transvection}
\sum_{\rho\in p_s(\Lambda_v)} \frac{Q_{\rho}^{1^{n-2}2}(t)}{z_\rho} = \frac{(1-t^n)\phi_{n-2}(t)}{s^v\phi_v(t^s)}.
\end{equation}

\end{prop}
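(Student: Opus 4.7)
The plan is to substitute Green's explicit formula for $Q_\rho^{1^n}(t)$ and Morris's formula for $Q_\rho^{1^{n-2}2}(t)$ into the left-hand sides, reindex via the bijection $\mu \leftrightarrow \rho = p_s(\mu)$ between $\Lambda_v$ and $p_s(\Lambda_v)$, and reduce both identities to a generating-function computation in a formal variable $y$.

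First, given $\mu = 1^{r_1}2^{r_2}\cdots \in \Lambda_v$, the multiplicities of $\rho = p_s(\mu)$ satisfy $r_{is}(\rho) = r_i(\mu)$ (and are zero for $s \nmid j$), from which
\[
z_\rho = \prod_i (is)^{r_i} r_i! = s^{\ell(\mu)} z_\mu, \qquad \ell(\mu) := \sum_i r_i,
\]
and Green's formula becomes $Q_\rho^{1^n}(t) = \phi_n(t)/\prod_i (1-t^{is})^{r_i}$. Substituting into (\ref{identity}) gives
\[
\sum_{\rho \in p_s(\Lambda_v)} \frac{Q_\rho^{1^n}(t)}{z_\rho} = \phi_n(t) \sum_{\mu \in \Lambda_v} \frac{1}{s^{\ell(\mu)} z_\mu \prod_i (1-t^{is})^{r_i}}.
\]
I would then evaluate the inner sum by packaging it in a generating series in $y$: the exponential (cycle-index) identity $\sum_v y^v \sum_{\mu \in \Lambda_v} \prod_i c_i^{r_i}/z_\mu = \exp(\sum_i c_i y^i/i)$, applied with $c_i = 1/(s(1-t^{is}))$, converts the problem to evaluating a product of elementary factors, obtained by expanding $(1-t^{is})^{-1} = \sum_k t^{isk}$ and collapsing via $\sum_i x^i/i = -\log(1-x)$. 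Euler's classical identity $\sum_v y^v/\phi_v(u) = \prod_k (1-yu^k)^{-1}$ (with $u = t^s$) expresses the right-hand side of (\ref{identity}) likewise as an infinite product, and the claim reduces to matching coefficients of $y^v$ in the two products.

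Identity (\ref{transvection}) then falls out almost immediately. For $s > 1$, every $\rho \in p_s(\Lambda_v)$ has parts of size at least $s \ge 2$, so $r_1(\rho) = 0$ and Morris's polynomial factor $(r_1-1)t^n - r_1 t^{n-1} + 1$ collapses to the constant $1 - t^n$. Consequently,
\[
Q_\rho^{1^{n-2}2}(t) = \frac{(1-t^n)\phi_{n-2}(t)}{\phi_n(t)}\, Q_\rho^{1^n}(t),
\]
and multiplying identity (\ref{identity}) through by the scalar $(1-t^n)\phi_{n-2}(t)/\phi_n(t)$ yields (\ref{transvection}) at once.

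I expect the main obstacle to be the generating-function step in (\ref{identity}). The factor $1/s$ inside the exponent and the non-uniform weight $s^{\ell(\mu)}$ in the summand must combine to produce precisely the $1/s^v$ on the right, and reconciling them requires care with formal power series. A potentially cleaner alternative is to derive (\ref{identity}) as a specialization of the symmetric-function identity $\sum_\rho z_\rho^{-1} Q_\rho^{1^n}(t)\,p_\rho(x) = \phi_n(t) e_n(x)$: choose $\tilde x = \{\zeta^j x_i : 0 \le j < s,\ i \ge 1\}$ for $\zeta$ a primitive $s$-th root of unity, so that $p_k(\tilde x) = 0$ unless $s \mid k$, isolating the sum over $\rho \in p_s(\Lambda_v)$; the right-hand side $\phi_n(t)\,e_n(\tilde x)$ can then be re-expressed in terms of $e_v$ at $x^s$, after which a principal specialization produces the factor $1/\phi_v(t^s)$.
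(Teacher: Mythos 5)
Your reduction of (\ref{transvection}) to (\ref{identity}) is exactly the paper's: for $s>1$ every $\rho\in p_s(\Lambda_v)$ has $r_1=0$, Morris's factor degenerates to $1-t^n$, and one multiplies (\ref{identity}) by $(1-t^n)\phi_{n-2}(t)/\phi_n(t)=(1-t^{n-1})^{-1}$. The problem is (\ref{identity}) itself, where your argument stops at precisely the step you call the main obstacle, and that step does not go through as you have set it up. With your count $z_{p_s(\mu)}=s^{\ell(\mu)}z_\mu$ (which is indeed what the centralizer order of a partition of $n$ gives), the generating function you build is $\exp\bigl(\sum_{i\ge1}y^i/(is(1-t^{is}))\bigr)$, while Euler's identity turns the right-hand side of (\ref{identity}) into $\sum_v (y/s)^v/\phi_v(t^s)=\exp\bigl(\sum_{i\ge1}y^i/(is^{\,i}(1-t^{is}))\bigr)$. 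These disagree already at $y^2$ for $s\ge2$: the coefficients are $\tfrac{1}{2s(1-t^{2s})}+\tfrac{1}{2s^2(1-t^s)^2}$ versus $\tfrac{1}{2s^2(1-t^{2s})}+\tfrac{1}{2s^2(1-t^s)^2}$. Concretely, for $s=v=2$ one finds $Q_{(4)}^{1^4}(t)/z_{(4)}+Q_{(2,2)}^{1^4}(t)/z_{(2,2)}=(1-t)(1-t^3)(3-t^2)/8$ with $z_{(4)}=4$, $z_{(2,2)}=8$, whereas the stated right-hand side is $(1-t)(1-t^3)/4$. So the "care with formal power series'' you defer cannot reconcile the two sides; the mismatch is genuine, not bookkeeping.

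This is exactly where your route and the paper's diverge. The paper's proof is a two-line reduction: $Q_{p_s(\mu)}^{1^n}(t)=\frac{\phi_n(t)}{\phi_v(t^s)}Q_\mu^{1^v}(t^s)$ together with the asserted relation $z_{p_s(\mu)}=s^v z_\mu$ collapses the sum to $s^{-v}\phi_n(t)\phi_v(t^s)^{-1}\sum_{\mu\in\Lambda_v}Q_\mu^{1^v}(t^s)/z_\mu$, and the $s=1$ case (Green's Lemma 5.2, applied at $t^s$) finishes it. The whole content of the proposition is therefore the weight $s^v$; your $s^{\ell(\mu)}$ agrees with it only when $\mu=1^v$ (already for $\mu=(v)$ one has $z_{p_s(\mu)}=sv=s\,z_\mu$, not $s^v z_\mu$). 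You must either justify the $s^v$ normalization, i.e.\ pin down what $z_\rho$ has to mean for $\rho\in p_s(\Lambda_v)$ in the formula of Green actually being invoked, or record that with the literal centralizer order the displayed identity is not what your computation produces; as written, your plan proves a different (and incompatible) evaluation of the left-hand side. The fallback you sketch does not repair this: the identity $\sum_\rho z_\rho^{-1}Q_\rho^{1^n}(t)p_\rho(x)=\phi_n(t)e_n(x)$ already fails at $n=1$ with the paper's formula for $Q^{1^n}$, and the root-of-unity specialization reintroduces exactly the factor $s^{\ell(\mu)}$ that causes the mismatch.
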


\begin{proof}

When $s=1$, (\ref{identity}) is just \cite[Lemma~5.2]{Green}.  For general $s$ and $\mu = 1^{m_1} 2^{m_2}\cdots\in \Lambda_v$,
\begin{align*}
Q_{p_s(\mu)}^{1^n}(t) = Q_{s^{m_1}(2s)^{m_2}\cdots}^{1^n}(t) = \frac{\phi_n(t)}{\prod_i (1-t^{is})^{m_i}} &= \frac{\phi_n(t)}{\phi_v(t^s)} \frac{\phi_v(t^s)}{\prod_i (1-(t^s)^i)^{m_i}} \\
& = \frac{\phi_n(t)}{\phi_v(t^s)}Q_\mu^{1^v}(t^s),
\end{align*}
and
\begin{align*}
z_{p_s(\mu)} = z_{s^{m_1}(2s)^{m_2}\cdots} &= s^{m_1} (2s)^{m_2}\cdots m_1! m_2! \cdots  \\
&= s^{m_1+2m_2+\cdots} 1^{m_1} 2^{m_2}\cdots m_1! m_2! \cdots = s^v z_\mu,
\end{align*}
which gives (\ref{identity}) in general.

For $s\ge 2$, $p_s(\mu)$ has no parts of size $1$, so for all $\mu\in \Lambda_v$,
$$Q_{p_s(\mu)}^{1^{n-2} 2}(t) = (1-t^{n-1})^{-1} Q_{p_s(\mu)}^{1^n}(t),$$
so (\ref{transvection}) follows from (\ref{identity}).
\end{proof}

Note in particular that the degree of the right hand size of (\ref{transvection})
is 
$$n + \frac{(n-2)(n-1)}2 - s \frac{v(v+1)}2 = \frac{n^2-(v+2)n+2}2.$$

\section{Primary character estimates}

\begin{prop}
\label{primary-case}
Suppose that for every pair $s,v$ of positive integers with $n=sv$, all $s$-simplices $\{k,\ldots,kq^{s-1}\}$, and all non-central $g\in G_n$,
$$\frac{I_s^k[v](g)}{I_s^k[v](1)} = O(q^{1-s}).$$
Then (\ref{key-bound}) holds for every primary type $\tau\in T_n$ and every character $\chi\in \Irra_\tau G_n$.
\end{prop}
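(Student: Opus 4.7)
The plan is to derive the bound for a general primary character $\chi=I_s^k[\lambda]$, $\lambda\vdash v$, $n=sv$, from the hypothesis on the one-row case $\lambda=(v)$ by expressing $\chi$ as a signed combination of parabolic inductions of one-row primary characters. The key tool is Green's identification of each primary block with a ring of symmetric functions, together with the Jacobi–Trudi identity.

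First, I would invoke from \cite{Green} the Hopf-algebra isomorphism that sends the primary characters attached to the fixed simplex $\sigma=\{k,qk,\ldots,q^{s-1}k\}$ to symmetric functions in one alphabet, under which $I_s^k[\lambda]$ corresponds to the Schur function $s_\lambda$ and the parabolic induction $I_s^k[(\mu_1)]\circ\cdots\circ I_s^k[(\mu_r)]$ corresponds to the product $h_{\mu_1}\cdots h_{\mu_r}$ of complete homogeneous symmetric functions. The Jacobi–Trudi identity then yields the character equation
$$I_s^k[\lambda] = \sum_{w\in S_{\ell(\lambda)}} \sgn(w)\,\bigl(I_s^k[(\lambda_1-1+w(1))]\circ\cdots\circ I_s^k[(\lambda_r-r+w(r))]\bigr),$$
with conventions $I_s^k[(0)]=1$ and $I_s^k[(m)]=0$ for $m<0$; the number of nonzero summands is bounded by $v!$, a constant depending only on $n$.

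Next, for each summand $\pi_\mu:=I_s^k[(\mu_1)]\circ\cdots\circ I_s^k[(\mu_r)]$ with $\mu\vdash v$, I would repeat the argument of Proposition~\ref{parabolic} verbatim. Formula~(\ref{Hall}) together with Proposition~\ref{flag-prob}, after splitting by the subset $S\subseteq\{1,\ldots,r\}$ of non-central conjugacy-class blocks, combines with the hypothesis applied to each factor $I_s^k[(\mu_i)]$ in $G_{s\mu_i}$ (licit on reading the hypothesis uniformly over all $n$, or equivalently by induction on $n$) to give an exponent $1-n+s(v-r)=1-sr$ at the dominant $S=\{1,\ldots,r\}$ term. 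Hence $|\pi_\mu(g)|/\pi_\mu(1)=O(q^{1-sr})\subseteq O(q^{1-s})$. Writing $\chi(g)/\chi(1)=\sum_\mu a_\mu\,\pi_\mu(g)/\pi_\mu(1)$, where $a_\mu=\pm\pi_\mu(1)/\chi(1)$ and $\sum_\mu a_\mu=1$, the coefficients $|a_\mu|$ remain $O(1)$ in $q$ (by determinantal cancellation in Jacobi–Trudi together with the explicit polynomial-in-$q^s$ formulas for primary-character degrees), yielding $|\chi(g)|/\chi(1)=O(q^{1-s})=O(q^{1-\dim\tau})$ as required.

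The main obstacle lies in the first step: making the displayed character identity precise within Green's framework, tracking his sign convention $(-1)^{n-v}$ and verifying that the ring isomorphism carries $h_m$ to the single-row primary character $I_s^k[(m)]$ and multiplication to parabolic induction. Once this identity is secured, the flag-counting estimates in the second step are a direct transcription of the proof of Proposition~\ref{parabolic}, and the bound on $|a_\mu|$ is a routine consequence of Jacobi–Trudi cancellation.
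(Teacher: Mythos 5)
Your proposal takes essentially the same route as the paper's proof: the paper likewise expands the primary character $I_s^k[\lambda]$ as a $q$-independent linear combination of parabolic inductions $I_s^k[v_1]\circ\cdots\circ I_s^k[v_r]$ of one-row primary characters (citing Green and Macdonald for exactly the $h$-expansion your Jacobi--Trudi identity produces), bounds each such induced character by rerunning the flag estimates underlying Proposition~\ref{parabolic} together with the hypothesis, and controls the degree ratios via the dominance condition on the partitions appearing in the expansion. The only small point to adjust is that the $O(1)$ bound on your coefficients $a_\mu$ comes from the term-by-term dominance comparison $\mu\succeq\lambda$ together with the degree formulas (as in the paper), not from cancellation in the determinant, but this is a matter of phrasing rather than substance.
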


\begin{proof}
Let $\chi = (-1)^{n-v}I_s^k[\lambda]$ be any primary character of $G_n$.  For $v_1,\ldots,v_r$ positive integers summing to $v= |\lambda|$, we set
$$\chi(v_1,\ldots,v_r) := I_s^k[v_1]\circ\cdots\circ I_s^k[v_r].$$
Equation (\ref{flag-sum}), inequality (\ref{S-decomp}), and estimate (\ref{one-char}) still hold for $\chi(v_1,\ldots,v_r)$, with $\tau_i=\tau=s$ for all $i=1,2,\ldots,r$, so 
$$\frac{\chi(v_1,\ldots,v_r)(g)}{\chi(v_1,\ldots,v_r)(1)} = O(q^{1-rs}).$$
Now $\chi$ can be expressed as a linear combination of  characters of the form $\chi(v_1,\ldots,v_r)$
where $v_1+\cdots+v_r = |\lambda|$, and the coefficients depend only on $\lambda$ and the $v_i$, not on $q$ \cite[p. 437]{Green};
the coefficients come from the expression of the Schur function of $\lambda$, $s_\lambda$, as a linear combination of 
products $s_{v_1}\cdots s_{v_r}$.  By \cite[I~\S6]{Macdonald}, these coefficients are non-zero if and only if $v_1\cdots v_r\succeq \lambda$ in the partial
order of partitions of $n$.  Thus, each has degree $\le \chi(1)$.  The proposition follows.
\end{proof}

Given a partition $\rho$ of $n$, we define a \emph{mode of substitition} $m$ of $\rho$ into $F$ to be a function $m\colon F\to \Lambda$ such that 
$$\prod_{f\in F} p_{d(f)}(m(f)) = \rho.$$
If $\varphi\colon F\to \Lambda$ is of degree $n$, and $c$ is the associated conjugacy class in $G_n$, we say $m$ is a mode of substitution into $c$ if
$$|m(f)| = |\varphi(f)|$$
for all $f\in F$.

\cite[Theorem 9]{Green} gives a formula for the ``principal parts'' $U_\rho$ of the character $I_s^k[v]$ which, combined with equations (18) and (19) in the same paper, gives 
an explicit formula for $I_s^k[v](c)$ for any conjugacy class $c$ of $G_n$.  The precise formula is not important to us, but it has the following general features.
It consists of an outer sum over partitions $\rho\in p_s(\Lambda_v)$.  For each $\rho$, there is an inner sum over modes of
substitution of $\rho$ into $c$.  The summand is a product of an $O(1)$ term and 
$$\prod_{f\in F} Q_{m(f)}^{\varphi(f)}(q^{d(f)}),$$
where the $O(1)$ factor does not depend on the mode if the semisimple part of $c$ is scalar.

For any partition  $\rho\in p_s(\Lambda_v)$, any mode of substitutions of $\rho$ into $c$, and any $f\in F$ for which $m(f)\neq \emptypart$, $d(f)|m(f)|$ is a positive integer divisible by $s$,
so $d(f)|\varphi(f)| = d(f)|m(f)| \ge s$.  Note that if there are at least two different elements $f_1,f_2\in F$ for which $m(f_i)$ is non-empty, it follows that no eigenvalue of an element of the conjugacy class $c$,
acting on $V$, can have eigenvalue multiplicity $\ge n-s$.

\begin{prop}
\label{Iskv}
For any $k$, $s$, and $v$ and any non-central $g$,
$$ \frac{I_s^k[v](g)}{I_s^k[v](1)} = O(q^{1-s}).$$
\end{prop}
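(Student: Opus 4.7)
The plan is to apply Green's explicit formula (\cite[Theorem~9]{Green} combined with equations (18)--(19) there), which gives
$$I_s^k[v](g) = \sum_{\rho\in p_s(\Lambda_v)}\sum_m c_{\rho,m}\prod_{f\in F}Q_{m(f)}^{\varphi(f)}(q^{d(f)}),$$
with coefficients $c_{\rho,m}$ that are $O(1)$ uniformly in $q$, and then to estimate each piece using Proposition~\ref{cancel}, Lemma~\ref{Q}, and (for the hardest cases) the BLST character bound from \cite{BLST}. Since $|\chi(g)/\chi(1)|\le 1$ always, the case $s=1$ is immediate, so I henceforth assume $s\ge 2$. By Proposition~\ref{cancel} equation~(\ref{identity}), $I_s^k[v](1)$ is (up to a bounded factor) $\phi_n(q)/(s^v\phi_v(q^s))$, a polynomial in $q$ of degree $n(n-v)/2$; this is the denominator in every ratio below.

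First I would handle the case that $g$ has scalar semisimple part, so $\varphi$ has single support $t-\alpha$ with $\varphi(t-\alpha)=\mu$ a partition of $n$. Then the mode is forced to be $m(t-\alpha)=\rho$, the inner sum collapses, and the coefficients reduce (up to a root of unity) to $z_\rho^{-1}$ times a constant independent of $\rho$. When $\mu=1^{n-2}2$, Proposition~\ref{cancel} equation~(\ref{transvection}) makes $I_s^k[v](g)$ a bounded scalar multiple of $(1-q^n)\phi_{n-2}(q)/(s^v\phi_v(q^s))$, and the ratio to $I_s^k[v](1)$ telescopes to $(1-q^n)\phi_{n-2}(q)/\phi_n(q)=1/(1-q^{n-1})$, of magnitude $O(q^{1-n})\subseteq O(q^{1-s})$. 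When $\mu\notin\{1^n,1^{n-2}2\}$, Lemma~\ref{Q} supplies $|Q_\rho^\mu(q)|=O(q^{(n-2)(n-3)/2+1})$ uniformly in $\rho$; comparison with the degree of $I_s^k[v](1)$ handles a range of parameters $(s,v)$ directly, and in the remaining range one needs either extra cancellation in $\sum_\rho z_\rho^{-1}Q_\rho^\mu(q)$ or the BLST estimate.

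For $g$ with non-scalar semisimple part, at least two $f\in F$ have $\varphi(f)\ne\emptypart$, and each non-empty $m(f)$ must satisfy that $d(f)|m(f)|$ is a multiple of $s$; hence every admissible $\rho=\prod_f p_{d(f)}(m(f))\in p_s(\Lambda_v)$ splits across at least two supports. This drastically constrains the set of contributing $(\rho,m)$, and combining Lemma~\ref{Q} on each factor with the BLST bound (which is designed precisely for such semisimple-nontrivial inputs) yields the required $O(q^{1-s})$.

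I expect the main obstacle to lie in the second subcase above, namely $g$ of scalar semisimple part with intermediate-complexity unipotent part. Lemma~\ref{Q}'s uniform degree bound does not always beat $\deg I_s^k[v](1)$ by enough margin to produce $O(q^{1-s})$; the required additional factor of $q^{s-1}$ must come either from an extension of Proposition~\ref{cancel} to general $\mu$ (Hall--Littlewood-type identities giving a closed form for $\sum_{\rho\in p_s(\Lambda_v)} z_\rho^{-1}Q_\rho^\mu(q)$) or from a careful invocation of BLST. Establishing that this extra cancellation is present for every $(s,v,\mu)$ outside the easy range is the technical heart of the argument.
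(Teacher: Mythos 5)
There is a genuine gap, and you have in fact flagged it yourself: for $g$ with scalar semisimple part and unipotent part $\mu\notin\{1^n,1^{n-2}2\}$ you leave the bound to an unproven ``extra cancellation'' in $\sum_{\rho\in p_s(\Lambda_v)}z_\rho^{-1}Q_\rho^\mu(q)$ (an extension of Proposition~\ref{cancel} to general $\mu$ that you do not establish) or to a ``careful invocation'' of \cite{BLST}. Moreover, the way you deploy the BLST bound does not work in the regime where you need it: in your non-scalar semisimple case you keep $v$ arbitrary, but BLST gives $\chi(g)=O(\chi(1)^{1-\frac1{2n}})$, and since $\chi(1)\asymp q^{n(n-v)/2}$ this yields only $\chi(g)/\chi(1)=O(q^{(v-n)/4})$, which beats the target $q^{1-s}=q^{(v-n)/v}$ exactly when $v\ge4$. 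So BLST cannot by itself handle ``semisimple-nontrivial'' elements when $v\in\{2,3\}$, which is precisely where your plan relies on it.

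The missing organizing idea in the paper is to stratify by $v$ rather than by the shape of $g$: for $v\ge4$, BLST together with Green's degree formula \cite[Lemma~7.4]{Green} already gives $O(q^{(v-n)/4})\le O(q^{1-s})$ for \emph{every} non-central $g$, with no use of the character formula. This reduces the problem to $v\le3$, where no new cancellation is needed: $v=1$ is done by Green's explicit formula (the value vanishes off primary classes and has degree at most $(n-1)(n-2)/2$ against $\chi(1)=|\phi_{n-1}(q)|$); $s=2$ is covered by Gluck's bound $|\chi(g)|/\chi(1)=O(q^{-1})$ \cite{Gluck}; for $s\ge3$ and $g_s$ non-scalar one uses the centralizer estimate $|\chi(g)|\le|C_{G_n}(g)|^{1/2}$ together with the observation that nonvanishing of $I_s^k[v]$ forces every eigenvalue multiplicity of $g_s$ to be $<n-s$, so $\dim C_{G_n}(g_s)\le n^2/2$ (for $v=2$) or $\le 5n^2/9$ (for $v=3$); for $g_s$ scalar and $g_u$ not a transvection, Lemma~\ref{Q} alone suffices, because with $v\le3$ the degree $(n^2-vn)/2$ of $\chi(1)$ exceeds $(n^2-5n+8)/2$ by more than $s-1$; and the transvection case goes through (\ref{transvection}) exactly as you describe, giving $O(q^{1-n})$. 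Your transvection computation and your general framework (Green's formula with $O(1)$ coefficients, divisibility constraints on modes) agree with the paper, but without the $v\ge4$ reduction, Gluck for $s=2$, and the centralizer argument for non-scalar $g_s$ at $v\in\{2,3\}$, the cases you defer to ``extra cancellation or BLST'' remain open.
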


\begin{proof}
We first suppose that $v\ge 4$.   By \cite[Lemma~7.4]{Green}, the degree of the irreducible character $\chi := (-1)^{n-v}I_s^k[v]$ is 
$$O(q^{\frac{n^2-vn}{2}}).$$
By a theorem of Bezrukavnikov-Liebeck-Shalev-Tiep \cite[Theorem~3.2]{BLST}, we have 
$$\chi(g) = O(\chi(1)^{1-\frac1{2n}}).$$
Thus 
$$\frac{\chi(g)}{\chi(1)} = O(q^{\frac{v-n}4}),$$
and $q^{\frac{v-n}4}\le q^{\frac{v-n}v} = q^{1-s}.$
We may therefore assume that $v\le 3$.

For $v=1$, an explicit formula for the character values is given by Green \cite[p.~431]{Green}.  The value is zero, unless $c$ is primary, in which case it can be written as a polynomial in $q$
with $O(1)$ coefficients, of degree at most
$$\frac{d(f) (n/d(f) - 1)(n/d(f)-2)}2 \le \frac{(n-1)(n-2)}2,$$
while $\chi(1) = |\phi_{n-1}(q)|$.  Thus
$$\frac{\chi(g)}{\chi(1)} = O(q^{1-n})$$
as desired.  We may therefore assume that $v$ is $2$ or $3$.  The proposition is trivial if $s=1$, so we may therefore assume $n\ge 4$.
By a theorem of David Gluck, $\chi(1)^{-1}|\chi(g)| = O(1/q)$, so we are justified in assuming $s\ge 3$.

Let $g_s g_u$ be the Jordan decomposition of $g$.  Suppose that $g_s$ is not a scalar.  Then we have an inclusion of centralizers $Z_{G_n}(g)\subset Z_{G_n}(g_s)$.
Since no eigenvalue of $g_s$ acting on $V$ has multiplicity $\ge n-s$, if $v=2$, the dimension of the centralizer is $n^2/2$, while if $v=3$, the dimension of the centralizer is at most $(n-s)^2 + s^2\le 5n^2/9.$  

By the centralizer estimate for characters, if $v=2$,
$$\chi(g) = O(q^{\frac{n^2}4}),$$
so
$$\frac{|\chi(g)|}{q^{1-n/2}\chi(1)} = O\bigl(q^{\frac{n^2}{4}-\frac{n^2-2n}2+\frac{n-2}2}\bigr)
= O\bigl(q^{\frac{-n^2+6n-4}4}\bigr).$$
For $n\ge 6$, this is $o(1)$.  

If $v=3$, we have
$$|\chi(g)| = O(q^{\frac{5n^2}{18}}),$$
so
$$\frac{|\chi(g)|}{q^{1-n/3}\chi(1)} = O\bigl(q^{\frac{5n^2}{18}-\frac{n^2-3n}2+\frac{n-3}3}\bigr) = O\bigl(q^{\frac{-4n^2+33n-18}{18}}\bigr),$$
which for $n\ge 9$ is $o(1)$.  

From now on, we assume $g_s$ is a scalar, so there exists a unique $f$ with $m(f)\neq\emptypart$, and $f$ is of degree $1$.
Moreover, $g$ non-central means $g_u\neq 1$.  We first consider the case that $g_u$ is not a transvection either, i.e., $\varphi(f) \neq1^{n-2} 2$; in particular, $n\ge 3$.
By Lemma~\ref{Q}, 
$$Q_{m(f)}^\varphi(f)(q^{d(f)}) \le O(q^{\frac{n^2-5n+8}2}).$$
Thus,%
$$\frac{\chi(g)}{q^{1-n/v}\chi(1)} = 
\begin{cases}
O(q^{\frac{6-2n}2})&\text{if $v=2$,}\\
O(q^{\frac{9-2n}3})&\text{if $v=3$.}
\end{cases}$$
If $v=2$ and $n\ge 4$ or $v=3$ and $n\ge 6$, the right hand side is $o(1)$.

All that remains is the case of transvections.  By Lemma~\ref{cancel},
$$\chi(g) = O\bigl(q^{\frac{n^2-(v+2)n+2}2}\bigr),$$
so
$$\frac{\chi(g)}{\chi(1)} = O(q^{1-n}).$$

\end{proof}

Combining Propositions \ref{parabolic}, \ref{primary-case}, and \ref{Iskv}, we obtain
\begin{thm}
\label{arith-flat}
If $g$ is any non-scalar element of $\SL_n(\F_q)$, then the number of pairs $(x,y)\in G_n^2$ such that $xyx^{-1}y^{-1} = g$ is 
$$|\xi_{G_n}^{-1}(g)| = O\bigl(q^{n^2+1}\bigr).$$
\end{thm}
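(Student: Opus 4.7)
The plan is to combine the Frobenius formula (\ref{Frob}) with the three propositions of this section to bound the number of commutator pairs. Starting from
$$|\xi_{G_n}^{-1}(g)| = |G_n| \sum_{\chi \in \Irr G_n} \frac{\chi(g)}{\chi(1)}$$
and using $|G_n| = O(q^{n^2})$, the goal reduces to showing
$$\sum_{\chi \in \Irr G_n} \frac{\chi(g)}{\chi(1)} = O(q)$$
for every non-central $g \in G_n$. The hypothesis that $g \in \SL_n(\F_q)$ is non-scalar is equivalent to $g$ being non-central in $G_n$, since in both $\GL_n$ and $\SL_n$ the center consists exactly of the scalar matrices lying in that group.

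First I would split the sum by type:
$$\sum_{\chi \in \Irr G_n} \frac{\chi(g)}{\chi(1)} = \sum_{\tau \in T_n} \sum_{\chi \in \Irra_\tau G_n} \frac{\chi(g)}{\chi(1)}.$$
Since $|T_n|$ is a finite constant depending only on $n$, it suffices to bound each inner sum by $O(q)$.

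Next I would apply the three propositions in sequence. Proposition \ref{Iskv} supplies exactly the estimate required as the hypothesis of Proposition \ref{primary-case}, namely $I_s^k[v](g)/I_s^k[v](1) = O(q^{1-s})$ at every non-central $g$. Proposition \ref{primary-case} then promotes this to the bound (\ref{key-bound}) for every primary type $\tau$ and every character of type $\tau$. With (\ref{key-bound}) in hand for every primary type, Proposition \ref{parabolic} delivers the desired per-type estimate $\sum_{\chi \in \Irra_\tau G_n} \chi(g)/\chi(1) = O(q)$ uniformly over $\tau \in T_n$. Summing over types and multiplying by $|G_n|$ then yields $|\xi_{G_n}^{-1}(g)| = O(q^{n^2+1})$.

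There is essentially no obstacle remaining at this stage: all of the character-theoretic work, including the flag-counting estimate of Proposition \ref{flag-prob}, the parabolic-induction bookkeeping of Proposition \ref{parabolic}, and the Green-polynomial asymptotics behind Proposition \ref{Iskv}, has already been carried out. The only care required is to verify that the implicit constants genuinely depend only on $n$ (and not on $q$, $\tau$, $\chi$, or $g$), so that the finite sum over $T_n$ preserves the $O(q)$ bound.
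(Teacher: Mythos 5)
Your proposal is correct and follows exactly the paper's own route: the paper deduces Theorem~\ref{arith-flat} precisely by chaining Proposition~\ref{Iskv} into Proposition~\ref{primary-case} into Proposition~\ref{parabolic}, summing the resulting $O(q)$ bound over the finitely many types in $T_n$, and multiplying by $|G_n|=O(q^{n^2})$ via the Frobenius formula (\ref{Frob}). Your added remark that a non-scalar element of $\SL_n(\F_q)$ is non-central in $G_n$ is a correct and appropriate detail.
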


\section{Geometric consequences}

\begin{thm}
The morphism $\xi_{\GL_n}\colon \GL_n^2\to \SL_n$ defined over any finite field $\F_q$ is flat at any point of $\GL_n^2$ which does not lie above the center of $\SL_n$.
\end{thm}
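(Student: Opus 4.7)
My plan is to combine the arithmetic estimate of Theorem~\ref{arith-flat} with the \emph{miracle flatness} theorem. Since $\GL_n^2$ and $\SL_n$ are both smooth over $\F_q$ (in particular $\GL_n^2$ is Cohen--Macaulay and $\SL_n$ is regular), the morphism $\xi_{\GL_n}$ is flat at a point $p\in\GL_n^2$ if and only if the fiber through $p$ has dimension equal to the expected value
$$\dim\GL_n^2-\dim\SL_n=2n^2-(n^2-1)=n^2+1.$$
So the entire task reduces to showing that for every non-central $g\in\SL_n$, the scheme-theoretic fiber $\xi_{\GL_n}^{-1}(g)$ is equidimensional of dimension exactly $n^2+1$ at each of its points.

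First I would convert the cardinality estimate of Theorem~\ref{arith-flat} into a geometric dimension bound. Let $g\in\SL_n(\bar{\F_q})$ be non-central, defined over some $\F_{q^m}$. The character-theoretic estimates underlying Theorem~\ref{arith-flat} are uniform in the underlying finite field --- all implicit constants depend only on $n$ --- so the theorem applies equally well with $q$ replaced by $q^{mk}$ for every $k\ge 1$, giving
$$\bigl|\xi_{\GL_n}^{-1}(g)(\F_{q^{mk}})\bigr|=O\bigl(q^{mk(n^2+1)}\bigr)$$
with implicit constant independent of $k$. A standard Lang--Weil argument (applied to each geometric component) then forces $\dim\xi_{\GL_n}^{-1}(g)\le n^2+1$.

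For the matching lower bound I would invoke upper semicontinuity of the function $p\mapsto\dim_p\xi_{\GL_n}^{-1}(\xi_{\GL_n}(p))$ on $\GL_n^2$. The commutator morphism is dominant onto $\SL_n$ (classically every element of $\SL_n(\bar{\F_q})$ is a commutator, but in any case dominance is clear by exhibiting a single regular semisimple value in the image), so the minimum value of this function is $\dim\GL_n^2-\dim\SL_n=n^2+1$, and hence at \emph{every} $p\in\GL_n^2$ the local fiber dimension is at least $n^2+1$. Combined with the upper bound, every irreducible component of every non-central fiber has dimension exactly $n^2+1$, so $\xi_{\GL_n}$ is equidimensional at each point lying over a non-central element. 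Miracle flatness (Matsumura, Theorem~23.1) now yields flatness at each such point.

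The main conceptual obstacle was precisely Theorem~\ref{arith-flat} itself, established by the delicate character-theoretic work of the preceding sections; the geometric step here is essentially formal. The one routine point of care is the uniformity of the implicit constants as one passes from $\F_q$ to $\F_{q^{mk}}$ in the point-counting argument, but this uniformity is built into the statement and proof of Theorem~\ref{arith-flat}.
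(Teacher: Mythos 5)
Your proposal is correct and follows essentially the same route as the paper: the upper bound $n^2+1$ on non-central fiber dimensions via Theorem~\ref{arith-flat} (whose constants are uniform in the finite field) together with Lang--Weil, the general lower bound on fiber dimensions, and miracle flatness. The only cosmetic difference is that you derive the lower bound from semicontinuity plus dominance of the commutator map, whereas the paper just invokes the inequality $\dim_y f^{-1}(f(y))\ge \dim Y - \dim X$, which needs no dominance hypothesis (so your aside that a single regular semisimple value in the image gives dominance --- which by itself it would not --- is dispensable).
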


\begin{proof}
If $f\colon \uY\to \uX$ is any morphism of varieties, then every every irreducible component of every fiber of $f$ has dimension $\ge \dim \uY-\dim \uX$.  This applies to the commutator morphism
$\GL_n^2\to \SL_n$, so every irreducible component of every fiber has dimension $\ge 2n^2-(n^2-1) = n^2+1$.
On the other hand, if we work over the ground field $\F_q$ and fix a non-central element $g\in \SL_n(\F_q)$, then for every extension $K$ of $\F_q$, the set $K$-points of the fiber $\xi_{\GL_n}^{-1}(g)$
is the same as the fiber of the map of sets $\xi_{\GL_n(K)}\colon \GL_n(K)^2\to \SL_n(K)$.  In particular, taking $K$ to be a finite extension of $\F_q$, we have by Theorem~\ref{arith-flat} that 
$$|\xi_{\GL_n}^{-1}(g)(K)| = O(|K|^{n^2+1}).$$
By the Lang-Weil bound, this implies that every irreducible component of $\xi_{\GL_n}^{-1}(g)$ has dimension $\le n^2+1$.  Combining this with the lower bound on component dimension, we see that
every non-central fiber is purely of dimension $n^2+1$.  By miracle flatness \cite[Theorem~23.1]{Matsumura}, this implies that the restriction of $\xi_{\GL_n}$ to the open subset of points not lying over the center of $\SL_n$ is flat.
\end{proof}

\begin{cor}
The morphism  $\xi_{\SL_n}\colon \GL_n^2\to \SL_n$ defined over any finite field $\F_q$ is flat at any point of $\SL_n^2$ which does not lie above the center of $\SL_n$
\end{cor}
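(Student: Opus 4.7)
The plan is to deduce this corollary from the preceding theorem by a short faithfully flat descent, exploiting that the commutator $[x,y]$ is invariant under independent rescaling of $x$ and $y$ by central scalars.

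The first step is to introduce the finite surjective cover
$$m\colon \G_m^2 \times \SL_n^2 \to \GL_n^2,\qquad (\alpha,\beta,x,y)\mapsto (\alpha x,\beta y).$$
A geometric fibre of $m$ over $(X,Y)\in\GL_n^2$ corresponds to the choice of an $n$th root of $\det X$ and an $n$th root of $\det Y$, so $m$ is finite and surjective. Since source and target are smooth and equidimensional of dimension $2n^2$, miracle flatness gives that $m$ is faithfully flat in every characteristic (even though $m$ fails to be \'etale when $\mathrm{char}\,\F_q$ divides $n$).

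Because $\alpha,\beta\in\G_m$ are central in $\GL_n$, the relation $[\alpha x,\beta y]=[x,y]$ gives the key identity
$$\xi_{\GL_n}\circ m \;=\; \xi_{\SL_n}\circ p,$$
where $p\colon \G_m^2\times \SL_n^2\to \SL_n^2$ denotes the projection onto the last two factors. The map $p$ is smooth, and in particular faithfully flat.

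Now fix $(x,y)\in\SL_n^2$ with $\xi_{\SL_n}(x,y)$ not in the centre of $\SL_n$. Viewing $(x,y)$ as a point of $\GL_n^2$, the preceding theorem gives that $\xi_{\GL_n}$ is flat at $(x,y)$. Composing with the flat morphism $m$ shows that $\xi_{\GL_n}\circ m=\xi_{\SL_n}\circ p$ is flat at $(1,1,x,y)\in \G_m^2\times\SL_n^2$, and faithful flatness of $p$ then descends flatness of $\xi_{\SL_n}\circ p$ at $(1,1,x,y)$ to flatness of $\xi_{\SL_n}$ at $(x,y)$, as required. The argument is essentially formal once the preceding theorem is available; the only point needing a brief verification is the faithful flatness of $m$ in characteristics dividing $n$, which is handled by the miracle-flatness observation above.
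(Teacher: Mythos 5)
Your argument is correct. The geometric input is the same as the paper's, namely the central-rescaling cover coming from the isogeny $\G_m\times\SL_n\to\GL_n$, but the way you transfer flatness is genuinely different. The paper restricts your map $m$ to a single fiber, obtaining a finite surjection $\GL_1\times\GL_1\times\xi_{\SL_n}^{-1}(g)\to\xi_{\GL_n}^{-1}(g)$, concludes $\dim\xi_{\SL_n}^{-1}(g)=n^2+1-2=n^2-1$ for non-central $g$ (using the fiber dimension $n^2+1$ established in the proof of the preceding theorem), and then applies miracle flatness a second time. You instead keep the global identity $\xi_{\GL_n}\circ m=\xi_{\SL_n}\circ p$ and argue purely formally: $m$ is finite flat surjective (your miracle-flatness observation, or the fact that $\G_m\times\SL_n\to\GL_n$ is a $\mu_n$-torsor), so the composite is flat at $(1,1,x,y)$ by the theorem, and flatness then descends along the smooth surjective projection $p$ via the standard fact that if $A\to B\to C$ are local homomorphisms with $B\to C$ faithfully flat and $A\to C$ flat, then $A\to B$ is flat (EGA IV, 2.2.11). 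Your route uses only the statement of the theorem, not the fiber-dimension computation inside its proof, and it sidesteps any further dimension counting or base change to an algebraic closure; you also correctly flag and handle the inseparability of $m$ when the characteristic divides $n$, which is immaterial for flat descent. What the paper's route buys in exchange is the explicit equality $\dim\xi_{\SL_n}^{-1}(g)=n^2-1$ for non-central $g$, which is the form of the statement echoed later when the central fibers over primitive roots of unity are analyzed; with your argument that equality is still recoverable afterwards from flatness plus smoothness of source and target, so nothing is lost.
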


\begin{proof}
This question is invariant under base change, so we work over an algebraic closure $K$ of $\F_q$.  Then there is a surjective morphism with finite fibers
$$\GL_1\times \GL_1\times \xi_{\SL_n}^{-1}(g)\to \xi_{\GL_n}^{-1}(g)$$
defined by $(a,b,(c,d))\mapsto (ac,bd)$.  Thus, $\dim \xi_{\SL_n}^{-1}(g) = \xi_{\GL_n}^{-1}(g)-2 = n^2-1$ for all non-central $g$.  Again, by miracle flatness, $\xi_{\SL_n}$ is flat away from the central fibers.
\end{proof}

\begin{cor}
\label{main}
If $K$ is any field, and $\xi_{\SL_n}$ is defined over $K$, then $\xi_{\SL_n}$ is flat away from the central fibers.
\end{cor}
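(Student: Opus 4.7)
The plan is to bootstrap from the finite-field case (the corollary just proved) by a standard spreading-out argument. Flatness is preserved by base change and descends faithfully flatly, so it suffices to treat the case where $K$ is algebraically closed; by miracle flatness applied to the smooth varieties $\SL_n^2$ and $\SL_n$ of dimensions $2(n^2-1)$ and $n^2-1$, it further suffices to show that every non-central fiber of $\xi_{\SL_n,K}$ has dimension exactly $n^2-1$. The lower bound on fiber dimension is automatic, so the issue is to prove the upper bound $\dim \xi_{\SL_n,K}^{-1}(g) \leq n^2-1$ for every non-central $g\in \SL_n(K)$.

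For $K$ of positive characteristic $p$, the upper bound follows by base change to $K$ from the preceding corollary applied to $\F_p$. The remaining case is $\mathrm{char}\,K=0$. Suppose for contradiction that $g \in \SL_n(K)$ is non-central with $\dim \xi_{\SL_n,K}^{-1}(g) \geq n^2$. Choose a finitely generated $\Z$-subalgebra $A\subset K$ over which $g$ is defined, so that $g$ is the generic-fiber image of some $\tilde g\in\SL_n(A)$. After replacing $A$ by a localization $A[1/f]$ (still finitely generated over $\Z$), we may assume $\tilde g$ is non-scalar at every point of $\Spec A$, since the non-scalar locus is open and contains the generic point.

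Now consider $\mathcal{F}:=\SL_n^2\times_{\SL_n,\,\tilde g}\Spec A$ with its projection to $\Spec A$. By generic flatness there is a dense open $V\subset\Spec A$ on which $\mathcal{F}|_V\to V$ is flat, hence has constant fiber dimension equal to the generic fiber dimension $d:=\dim\xi_{\SL_n,K}^{-1}(g)\geq n^2$. Since $A$ is a finitely generated $\Z$-algebra, it is a Jacobson ring whose closed points are dense and have finite residue fields; thus $V$ contains a closed point $\m$ with $A/\m=\F_q$ finite. The specialization $\tilde g\bmod\m\in\SL_n(\F_q)$ is non-scalar (hence non-central), and $\dim\xi_{\SL_n,\F_q}^{-1}(\tilde g\bmod\m)=d\geq n^2$, contradicting the preceding corollary.

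The argument is essentially routine once the correct framework is identified; the main care is in assembling the standard facts (miracle flatness, generic flatness, the Jacobson property of finitely generated $\Z$-algebras) so that a hypothetical characteristic-zero counterexample specializes to a finite-field counterexample. I do not expect any substantial obstacle.
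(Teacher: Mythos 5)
Your overall route is exactly the paper's: spread the characteristic-zero datum out over a finitely generated $\Z$-subalgebra $A$, use the Jacobson property of $\Spec A$ (closed points dense, finite residue fields) to specialize a hypothetical bad non-central fiber to a finite field, and contradict the finite-field corollary; the reduction to algebraically closed fields, the positive-characteristic case by base change, and miracle flatness all match the paper. The one step that is not valid as stated is ``flat over a dense open $V$, hence constant fiber dimension equal to the generic fiber dimension $d$.'' Flatness of a finite-type morphism over an integral base does not make the fiber dimension on the base constant: flatness forces every irreducible component of the total space to dominate the base, but a component need not surject onto it, so the fiber dimension can drop at special points. For instance $\A^1_{\Z}\sqcup \A^2_{\Z[1/p]}\to \Spec\Z$ is flat, surjective and of finite type, with generic fiber of dimension $2$ but fiber of dimension $1$ over $(p)$. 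Consequently you cannot conclude that the particular closed point $\m\in V$ you choose satisfies $\dim \mathcal{F}_\m=d$; a priori the big component of the generic fiber could miss that point.

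The conclusion you need is nevertheless true, and the correct tool is the one the paper invokes: upper semicontinuity of fiber dimension \cite[Th\'eor\`eme~13.1.3]{EGA}. Let $\pi\colon\mathcal{F}\to\Spec A$ be your family, let $W$ be an irreducible component of the generic fiber with $\dim W=d\ge n^2$, and let $Z$ be the closure of $W$ in $\mathcal{F}$. The set of points $x$ with $\dim_x\mathcal{F}_{\pi(x)}\ge d$ is closed and contains the generic point of $W$, hence contains all of $Z$; the image $\pi(Z)$ is dense and constructible, so it contains a dense open subset of $\Spec A$, which meets the open non-scalar locus, and by the Jacobson property this intersection contains a closed point $\m$ with finite residue field. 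Any point of $Z$ above $\m$ then gives $\dim\mathcal{F}_\m\ge n^2$ with $\tilde g\bmod\m$ non-central, the desired contradiction (and generic flatness becomes unnecessary). With this substitution your argument is the paper's proof, phrased along the section $\tilde g$ rather than via the closed ``large fiber'' locus in $\SL_{n,A}$ — a cosmetic difference only.
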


\begin{proof}
Since field extensions are faithfully flat, flatness is unaffected by field extensions, and the result follows immediately when $K$ is of positive characteristic.  On the other hand, suppose $K$ is of characteristic zero, and $\xi_{\SL_n}$ fails to be flat at some point $(a,b)$ with
$c := aba^{-1}b^{-1}$ non-central.  Then $\xi_{\SL_n}^{-1}(c)$ has dimension $\ge n^2$.  There exists an integral domain $A$ finitely generated  over $\Z$ such that $K$ is the fraction field of $A$, and $c$ belongs to the $\SL_n(A)$.
We  can regard $\SL_n$ as a scheme 
$$\SL_{n,A} := \Spec A[x_{11},\ldots,x_{nn}]/(\det (x_{ij})-1),$$
and $\xi_{\SL_n}$ extends to a morphism of schemes over $A$.  

By upper-semicontinuity of fiber dimension  \cite[Th\'eor\`eme~13.1.3]{EGA},
the set of points of $\SL_{n,A}$ over which the fiber of $\xi_{\SL_n}$ has dimension $\ge n^2$ is closed.  As schemes of finite type over $\Z$ are Jacobson \cite[Corollaire~10.4.6]{EGA}, the closed points in this set are Zariski dense, so in particular there is a non-central closed point.  The residue field of a closed point of $A$  is a finitely generated $\Z$-algebra and must therefore be finite, so it follows that there exists a finite field $\F_q$ and a non-central element of $\SL_n(\F_q)$
over which the $\xi_{\SL_n}$-fiber has dimension $\ge n^2$.  The contradiction proves the corollary.
\end{proof}

\section{Some central fibers}

We conclude by showing that some central fibers of $\xi_{\SL_n}$ also have dimension $n^2-1$.  This allows us to strengthen Corollary~\ref{main} when $n$ is prime.

\begin{prop}
Let $K$ be algebraically closed.
If $\zeta\in K^\times$ is a primitive $n$th root of unity, then
$$\xi_{\SL_n}^{-1}(\zeta I) \cong \PGL_n.$$
In particular, $\dim \xi_{\SL_n}^{-1}(\zeta I) = n^2-1$.
\end{prop}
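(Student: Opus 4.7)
The plan is to identify $\xi_{\SL_n}^{-1}(\zeta I)$ with a free $\PGL_n$-orbit under conjugation. A pair $(x,y)\in\SL_n^2$ lies in the fiber if and only if $xy = \zeta yx$. Let $Q = \mathrm{diag}(1,\zeta,\ldots,\zeta^{n-1})$ and let $P$ be the cyclic shift $Pe_i = e_{i+1}$ (indices mod $n$); these satisfy $QP = \zeta PQ$. Since $K$ is algebraically closed, we may choose $\mu,\nu\in K^\times$ with $\mu^n = \zeta^{-n(n-1)/2}$ and $\nu^n = (-1)^{n-1}$, giving $x_0 := \mu Q$ and $y_0 := \nu P$ in $\SL_n^2$ satisfying $x_0 y_0 = \zeta y_0 x_0$. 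Define the orbit morphism $\phi : \PGL_n \to \xi_{\SL_n}^{-1}(\zeta I)$ by $\phi([g]) = (gx_0 g^{-1},\, gy_0 g^{-1})$.

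\emph{Injectivity.} The eigenspaces of $x_0$ are the coordinate lines, cyclically permuted by $y_0$, so $x_0$ and $y_0$ have no common proper invariant subspace. Schur's lemma then forces any matrix commuting with both to be scalar, so the stabilizer of $(x_0,y_0)$ in $\PGL_n$ is trivial; in particular, the differential $d\phi_{[1]}([A]) = ([A,x_0],[A,y_0])$ has trivial kernel. By $\PGL_n$-equivariance $\phi$ is unramified everywhere and injective on $K$-points.

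\emph{Surjectivity.} Given $(x,y)$ in the fiber, the identity $yxy^{-1}=\zeta^{-1}x$ shows multiplication by $\zeta$ permutes the spectrum of $x$; since $\zeta$ is primitive of order $n$, the spectrum is a single $\langle\zeta\rangle$-coset $\{\alpha,\alpha\zeta,\ldots,\alpha\zeta^{n-1}\}$, and $\det x = 1$ forces $\alpha^n = \mu^n$, so $\mu$ occurs as an eigenvalue. Pick $v_1$ with $xv_1 = \mu v_1$; then the vectors $v_i := y^{i-1}v_1$ are eigenvectors of $x$ for the distinct eigenvalues $\mu\zeta^{i-1}$ and form a basis. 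In this basis $x$ has the matrix $x_0$, while $y$ sends $v_i\mapsto v_{i+1}$ for $i<n$ and $v_n\mapsto c v_1$, where $\det y = 1$ forces $c = (-1)^{n-1} = \nu^n$. Rescaling to $v_i' := \nu^{1-i}v_i$ preserves the diagonal matrix of $x$ and turns $y$ into $\nu P = y_0$, so the change-of-basis matrix $g$ satisfies $\phi([g]) = (x,y)$.

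The two steps exhibit $\phi$ as a bijective unramified orbit morphism from the smooth irreducible variety $\PGL_n$ onto $\xi_{\SL_n}^{-1}(\zeta I)$; combined with the general fiber-dimension lower bound $\dim\xi_{\SL_n}^{-1}(\zeta I)\ge n^2-1$ used earlier in the paper, standard facts about algebraic group actions with trivial stabilizer force $\phi$ to be an isomorphism, yielding $\xi_{\SL_n}^{-1}(\zeta I)\cong\PGL_n$ and the stated dimension. The substantive obstacle is the surjectivity step: \emph{primitivity} of $\zeta$ is what guarantees that the spectrum of any $x$ in the fiber fills out a full $\langle\zeta\rangle$-orbit of $n$ distinct eigenvalues, without which the simultaneous diagonalization of $x$ and the normalization of $y$ into the standard cyclic form $\nu P$ would not be available, and the scalar ambiguity absorbed by the rescaling $v_i\mapsto\nu^{1-i}v_i$ would fail to be compatible with the $\SL_n$-constraints on $\det x$ and $\det y$.
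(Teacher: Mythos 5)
Your proof is correct and takes essentially the same route as the paper: show that every pair in $\xi_{\SL_n}^{-1}(\zeta I)$ is simultaneously conjugate to a fixed Weyl pair (diagonal matrix with $\zeta$-spaced spectrum, cyclic shift), so the fiber is a single conjugation orbit whose stabilizer is the scalars, hence isomorphic to $\PGL_n$. Your explicit normalization by $\mu$ and $\nu$ to keep the standard pair inside $\SL_n$ is in fact more careful than the paper's choice of $(A,B)$, whose determinants the paper does not really pin down, so no changes are needed.
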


\begin{proof}
If $(a,b)\in \xi_{\SL_n}^{-1}(\zeta I)(K)$, then $ab = \zeta ba$, and right-multiplying by $b^{-1}$, we see that the spectrum of $a$ is invariant under multiplication by $\zeta$, so the eigenvalues of $a$
can be written $-\alpha, -\zeta\alpha,-\zeta^2\alpha,\ldots,-\zeta^{n-1}\alpha$ for some $\alpha$.  In particular, since there are $n$ distinct eigenvalues, $a$ is diagonalizable.  On the other hand $1=\det(a)=(-1)^n\alpha^n\zeta^{n(n-1)/2}= 1$, so the set of eigenvalues of $a$ is exactly
$$\{-1,-\zeta,\zeta^2,\ldots,-\zeta^{n-1}\}.$$
The condition $ab = \zeta ba$ implies that $b$ maps the $-\zeta^i$-eigenspace of $a$ to the $-\zeta^{i+1}$-eigenspace.

Thus, for each $(a,b)$ in the fiber, there exists $g\in \GL_n(K)$ such that
$$g a g^{-1} = A := \begin{pmatrix}
-1&0&\cdots&0\\
0&-\zeta&\cdots&0\\
\vdots&\vdots&\ddots&\vdots\\
0&0&\cdots&-\zeta^{n-1}
\end{pmatrix}.$$
The condition $ab = \zeta ba$ now implies
$$gbg^{-1}=B := \begin{pmatrix}
0&0&\cdots&0&1\\
1&0&\cdots&0&0\\
0&1&\cdots&0&0\\
\vdots&\vdots&\ddots&\vdots&\vdots\\
0&0&\cdots&1&0
\end{pmatrix}.$$
Note that $(A,B)\in \xi_{\SL_n}^{-1}(\zeta I)(K)$, so the fiber is in fact non-empty.

It follows that $\xi_{\SL_n}^{-1}(\zeta I)$ forms a single orbit under the 
action of $\GL_n$ by simultaneous conjugation of both coordinates. The stabilizer consists of diagonal matrices which commute with $B$,
i.e., scalar matrices, so the orbit is $\PGL_n$.

\end{proof}

\begin{cor}
If $n$ is prime, $\xi_{\SL_n}$ is flat on the complement of the variety $\xi_{\SL_n}^{-1}(1)$ of commuting pairs.
\end{cor}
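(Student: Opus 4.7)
The plan is to combine Corollary~\ref{main} with the preceding proposition. Corollary~\ref{main} already gives flatness of $\xi_{\SL_n}$ away from the preimage of the whole center of $\SL_n$; the task is therefore to upgrade flatness across those remaining points $(a,b)\in \SL_n^2$ with $[a,b]$ central but not equal to the identity.

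Since flatness is invariant under faithfully flat base change, I may first pass to an algebraic closure of $K$. If the characteristic equals $n$, then $x^n-1 = (x-1)^n$, so the only $K$-point of the center of $\SL_n$ is the identity; no such points $(a,b)$ exist and the statement reduces to Corollary~\ref{main}. Otherwise $K$ contains a primitive $n$th root of unity, and primality of $n$ forces every non-identity element of the center to have the form $\zeta I$ with $\zeta$ primitive. For any such $\zeta$, the preceding proposition identifies $\xi_{\SL_n}^{-1}(\zeta I)$ with $\PGL_n$, which is smooth and of pure dimension $n^2-1 = \dim\SL_n^2 - \dim\SL_n$.

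Together with the fiber-dimension computation for non-central targets coming out of Corollary~\ref{main}, this shows that the restriction of $\xi_{\SL_n}$ to the open subscheme $\SL_n^2 \setminus \xi_{\SL_n}^{-1}(I)$ has equidimensional fibers of the expected dimension, with smooth source and smooth target. Miracle flatness (\cite[Theorem~23.1]{Matsumura}), applied just as in the proofs of the preceding corollaries, then yields flatness of this restricted morphism. Since flatness is local on the source, this is exactly the desired statement.

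There is no substantive obstacle in this argument: the combinatorial and character-theoretic content is already packaged into Corollary~\ref{main}, and the explicit fiber computation of the preceding proposition handles the new central points. Primality of $n$ is used only in a very lightweight way, to guarantee that every non-identity central element is primitive and hence falls under the proposition, so that no intermediate cases need to be analyzed.
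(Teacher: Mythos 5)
Your argument is correct and is exactly the one the paper intends: the corollary is stated without proof because, as you observe, primality of $n$ makes every non-identity central element a primitive-root scalar $\zeta I$ (or nonexistent in characteristic $n$), so the preceding proposition's computation $\dim\xi_{\SL_n}^{-1}(\zeta I)=n^2-1$ combines with Corollary~\ref{main} and miracle flatness to give flatness off $\xi_{\SL_n}^{-1}(1)$. No discrepancies with the paper's approach.
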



\begin{thebibliography}{10}

\bibitem{AA}Aizenbud, Avraham; Avni, Nir: Representation growth and rational singularities of the moduli space of local systems. \textit{Invent.\ Math.} \textbf{204} (2016), no.\ 1, 245--316.

\bibitem{BLST}Bezrukavnikov, Roman; Liebeck, Martin W.;  Shalev, Aner: Tiep, Pham Huu : Character bounds for finite groups of Lie type,  arXiv:1707.03896.

\bibitem{Gluck}Gluck, David: Sharper character value estimates for groups of Lie type. \textit{J.\ Algebra} \textbf{174} (1995), no.\ 1, 229--266.

\bibitem{Green}Green, J.\ A.: The characters of the finite general linear groups. \textit{Trans.\ Amer.\ Math.\ Soc.} \textbf{80} (1955), 402--447.

\bibitem{EGA}Grothendieck, Alexandre: \'El\'ements de g\'eom\'etrie alg\'ebrique. IV.
\'Etude locale des sch\'emas et des morphismes de sch\'emas III,
\textit{Inst. Hautes \'Etudes Sci. Publ. Math.} \textbf{28} (1966), 255~pp.

\bibitem{Li}Li, Jun: The space of surface group representations. \textit{Manuscripta Math.} \textbf{78} (1993), no.\ 3, 223--243. 

\bibitem{Macdonald}Macdonald, I.\ G.: Symmetric functions and Hall polynomials. Second edition. With contributions by A. Zelevinsky. Oxford Mathematical Monographs. Oxford Science Publications. The Clarendon Press, Oxford University Press, New York, 1995.

\bibitem{Matsumura}Matsumura, Hideyuki: Commutative Ring Theory. Cambridge University Press, Cambridge, 1986.

\bibitem{Morris}Morris, A.\ O.: The characters of the group GL(n,t).  \textit{Math.\ Z.} \textbf{1} (1963) 112--123.

\bibitem{LS}Liebeck, Martin W.; Shalev, Aner: Fuchsian groups, finite simple groups and representation varieties. \textit{Invent.\ Math.} \textbf{159} (2005), no.\ 2, 317--367.
\end{thebibliography}
\end{document}